\DeclarePairedDelimiter\dist{\lVert}{\rVert}
\DeclarePairedDelimiter{\ceil}{\lceil}{\rceil}
\newcommand{\Vol}{\textsc{vol}}
\renewcommand{\epsilon}{\ensuremath\varepsilon}
\newcommand{\eps}{\ensuremath{\varepsilon}}
\renewcommand{\phi}{\ensuremath{\varphi}}
\newcommand{\cc}{\textsc{cc}}
\newcommand{\Ex}{\mathbb{E}}
\newtheorem*{rep@theorem}{\rep@title}
\newcommand{\newreptheorem}[2]{%
\newenvironment{rep#1}[1]{%
 \def\rep@title{#2 \ref{##1}}%
 \begin{rep@theorem}}%
 {\end{rep@theorem}}}
\newtheorem{theorem}{Theorem}[section]
\newtheorem{corollary}[theorem]{Corollary}
\newtheorem{definition}[theorem]{Definition}
\newtheorem{lemma}[theorem]{Lemma}
\begin{document}

\title{Existence of Small Separators Depends on Geometry for Geometric Inhomogeneous Random Graphs}

\author{Johannes Lengler,
Lazar Todorovi\'c}
\date{
\ \\*[-2em]
ETH Z\"urich, Department of Computer Science, Switzerland\\
\bigskip
}

\maketitle

\begin{abstract}
We show that Geometric Inhomogeneous Random Graphs (GIRGs) with power law weights may either have or not have separators of linear size, depending on the underlying geometry. While it was known that for Euclidean geometry it is possible to split the giant component into two linear size components by removing at most $n^{1-\eps}$ edges, we show that this is impossible if the geometry is given by the minimum component distance.
\end{abstract}

\section{Introduction}
\emph{Geometric Inhomogeneous Random Graphs (GIRGs)} were introduced by Bringmann, Keusch and Lengler~\cite{bringmann2017sampling,BKLeuclidean} as a model for graphs which are \emph{scale-free} (i.e. the degree sequence follows a power law), but also have a community structure. In the model, each vertex draws independently a weight and a position in some geometric space, and for each pair of vertices an edge is inserted independently with a probability that depends on the weights and distance of the endpoints. The model was inspired by Chung-Lu random graphs~\cite{ChungLu}, and it contains hyperbolic random graphs (HypRGs)~\cite{KrioukovPKVB} as a special case~\cite{BKLeuclidean}. It falls into a class of Chung-Lu type random graph models~\cite{BKLgeneral}, which implies that w.h.p.\footnote{\emph{with high probability}, i.e., with probability tending to one as the number of vertices tends to $\infty$.} a random graph from the model has a giant component of linear size, all other components are of polylogarithmic size, the diameter of each component is polylogarithmic (small-world), and the average distance in the giant component is $O(\log \log n)$ (ultra-small world). On the other hand, for the case of euclidean GIRGs (EuGIRGs, the underlying geometric space has euclidean geometry) it was shown~\cite{bringmann2017sampling,BKLeuclidean} that w.h.p. the graphs have clustering coefficient\footnote{Essentially, the clustering coefficient is the probability that two random neighbours of a random vertex are adjacent, see Definition~\ref{def:cc} on page~\pageref{def:cc} for a precise definition.} $\Omega(1)$, and that they have sublinear separators.\footnote{I.e., the giant component may be split in two linear size parts by removing a sublinear number of edges.} 

Other geometries, in particular the non-metric \emph{Minimum Component Distance} (MCD,~\cite[Example 7.2]{BKLgeneral}), have been proposed, but not intensively studied. It is clear from~\cite{BKLgeneral,bringmann2017sampling,BKLeuclidean} that the results on component structure, diameter, average distance, and clustering coefficient carry over.\footnote{For the clustering coefficient this was not explicit in~\cite{bringmann2017sampling,BKLeuclidean}. For convenience, we give a proof in the appendix.} However, it was unclear whether small separators exist in these graphs. In this paper we give a negative answer: w.h.p. the giant component cannot be split in two linear size parts by removing a sublinear number of edges. Thus in the GIRG model, it depends on the underlying geometry whether small separators exists.

For Erd\H{o}s-R\'enyi random graphs with edge probability $p=\tfrac{1+\Omega(1)}{n}$, there is a classical technique by {\L}uczak and McDiarmid~\cite{LMD} to prove that the giant component is stable, i..e that there are no sublinear separators. The main idea is to draw the edges in two batches with probabilities $p_1$ and $p_2$, respectively, where $p_2 = \tfrac{\eps}{n}$ for some small $\eps >0$, and $p_1 \approx p-p_2$. After the first batch there is already a giant component, and {\L}uczak and McDiarmid prove that for any connected graph there are relatively few sparse cuts, i.e., cuts with a small number of cross-edges, see Lemma~\ref{LMD} below for the exact statement. Since the second batch is drawn independently of the first one, all these sparse cuts will be destroyed by the second batch. Moreover, since $p_2$ is small, the giant component is not changed significantly by the second batch. Bollob\'as, Janson, and Riordan~\cite{bollobas2007phase} have shown that the proof can be generalised to a much more general class of random graph models.\footnote{This includes Chung-Lu random graphs, but not GIRGs; the latter is most evident by observing that GIRGs have different properties than the graphs in~\cite{bollobas2007phase}, for example EuGIRGs have sublinear separators.} A crucial ingredient for this approach is that the edges can be inserted in \emph{independent} batches. (Or alternatively, graph exposition can be linked to a branching process.) This is not the case for GIRGs, since the edges depend on the position of the vertex, and are thus highly dependent of each other. The \emph{main technical contribution} of this paper is how these dependencies can be overcome. An essential ingredient is a version of Azuma's inequality with two-sided error that has recently been developed independently (in slightly different forms) by Bringmann, Lengler, and Keusch~\cite[Theorem 3.3]{BKLgeneral}, and by Combes~\cite{combes15}. We refer the reader to Section~\ref{secComp} for a more detailed overview over the proof and its main obstacles. 

\paragraph{Background on the euclidean GIRG model.} The EuGIRG model, and in particular the HypRG model\footnote{In hyperbolic random graphs the positions are drawn uniformly at random in a hyperbolic disc. This is a special case of one-dimensional GIRGs, where the angular coordinate of the hyperbolic plane corresponds to a one-dimensional geometric space, and the radial coordinate corresponds to the weight, cf.~\cite[Section 7]{bringmann2017sampling}.}, have been intensively studied in the last years in mathematics~\cite{bode2015largest,candellero2016clustering,candellero2016bootstrap,BKLgeneral,kiwi2017spectral}, physics~\cite{BogunaPK10,KrioukovPKVB,BogunaK09,BogunaKC09}, and computer science~\cite{KrioukovPBV09,PapadopoulosKBV10,gugelmann2012random,kiwi2014bound,friedrich2015cliques,papadopoulos2015network,koch2016bootstrap,blasius2016efficient,blasius2016hyperbolic,bringmann2017sampling,lengler2017greedy}. A closely related model of infinite graphs is \emph{scale-free percolation}~\cite{deijfen2013scale,deprez2015inhomogeneous}, see~\cite{Hofstad2017explosion} for a comparison. EuGIRGs have been used as a model for social networks~\cite{lengler2017greedy} and for the internet graph~\cite{BogunaPK10,PapadopoulosKBV10}, especially for evaluating routing protocols, both experimentally~\cite{BogunaPK10,KrioukovPKVB,BogunaK09,KrioukovPBV09,PapadopoulosKBV10,papadopoulos2015network,blasius2016efficient} and theoretically~\cite{lengler2017greedy}. Mathematically, EuGIRGs have been studied with respect to their separators and entropy~\cite{bringmann2017sampling}, their spectral properties~\cite{kiwi2017spectral}, and under bootstrap percolation~\cite{candellero2016bootstrap,koch2016bootstrap}. Computationally, sampling algorithms~\cite{kiwi2014bound,bringmann2017sampling} and compression algorithms~\cite{bringmann2017sampling} have been studied.

The analysis of GIRGs is often considerably harder than the analysis of non-geometric random graph models like the Erd\H{o}s-R\'enyi model or the Chung-Lu model. Most notably, an important and classic tool for the latter are branching processes. For example, exploring an Erd\H{o}s-R\'enyi graph with a breadth-first search is intimately linked to a branching process, since for a long time any new edge will lead to an unexplored vertex w.h.p.. This connection fails for GIRGs\footnote{although recently a related approach has been proposed in~\cite{Hofstad2017explosion} for scale-free percolation.} because they are strongly clustered. Hence the probability that a new edge leads to an already explored vertex is rather large and, crucially, depends strongly on the positions of the uncovered vertices. This broken connection leads to many graph properties which are not observed in non-geometric graphs. For example, a branching process is very unlikely to reach $\omega(\log n)$ leaves and still die out (except in threshold cases), which is why there are no components in the range $\omega(\log n) \cap o(n)$ in non-threshold Erd\H{o}s-R\'enyi graphs or Chung-Lu graphs. On the other hand, GIRGs are easily seen to have components of size $(\log n)^{1+\Theta(1)}$ w.h.p.\footnote{In fact, quite a lot of them. Every box of volume $\tfrac1n (\log n)^{1+\eps}$ has probability $n^{o(1)}$ to contain such a component if $\eps>0$ is a sufficiently small constant, so the expected number of such components is $n^{1-o(1)}$.}, which shows that they behave very different from branching processes. 

On the other hand, GIRGs have still a certain degree of independence among the edges: conditioned on the position of the vertices, the edges are generated by independent coin flips. This still allows to use concentration inequalities in some settings. For example, for any fixed vertex $v$, the degree of $v$ is the sum of $n-1$ independent random variables (and thus, approximately Poisson distributed). However, for a pair of vertices $u,v$ the situation is already much more involved: if $u$ and $v$ are adjacent, then their degrees in the remaining graph are still Poisson distributed (in the limit), but they are not distributed like two \emph{independent} Poisson distributions.


\paragraph{Background on the Minimum Component Distance (MCD).} In the GIRG model, the geometric position of a vertex is supposed to represent properties and preferences of that vertex, like place of residence, profession, interests, and hobbies for a social network. It has been argued that the euclidean distance does not well reflect the underlying geometry of social networks, since two vertices are only close in the euclidean geometry if they are similar in \emph{all} their components, while nodes in social networks should be likely to connect if they agree in \emph{some} aspects. Thus other distance functions have been proposed, in particular the \emph{Minimum Component Distance} (MCD~\cite[Example 7.2]{BKLgeneral}), where the distance between two $d$-dimensional vectors is the minimum of the distances in each components. Note that this distance function is not a metric, which is also considered realistic for social networks. The resulting model is still of Chung-Lu type~\cite[Theorem 7.3]{BKLgeneral}, and the results in~\cite{BKLgeneral} still apply (component structure, diameter, average distance). Moreover, it is easy to see that the MCD still has a clustering coefficient of $\Omega(1)$ (see appendix), which is considered important for a model of social and technological networks. Non-geometric random graph models like (sparse) Erd\"os-R\'enyi graphs or Chung-Lu graphs tend to have very small clustering coefficients~\cite{van2016random}. 

\paragraph{Structure of the Paper}
In Section~\ref{secModel} we will give a formal definition of the GIRG model. In Section~\ref{secResult} we formally define $(\delta,\eta)$-cuts and gives the formal statement of our main theorem that MCD-GIRGs have no small separators. In Section~\ref{secComp} we give a proof outline, and explain why the classical approach of {\L}uczak and McDiarmid is not directly applicable. Section~\ref{secTools} collects some tools and describes the uncovering procedure that we use in our proof, while~\ref{proofSec} contains the proof of the main theorem. Finally, in the appendix we show that w.h.p.\! MCD-GIRGs have clustering coefficient $\Omega(1)$ as the MCD satisfies a stochastic triangle inequality.

\section{Model}
\label{secModel}
We now come to the definition of the GIRG model. The GIRG model was first introduced in~\cite{bringmann2017sampling}, but only for euclidean distances\footnote{or for any other metric that comes from a norm -- they all give equivalent models.}, and we refer to this case as \emph{euclidean GIRG} or \emph{EuGIRG}. We follow the definition in \cite[Section 7]{BKLgeneral}, which allows a much wider class of distance functions.

The GIRG model is defined on the vertex set $V = \{1, \ldots, n\}$,\footnote{where we are interested in the case $n\to\infty$. All Landau notation $O(.), \Omega(.)$ etc. is w.r.t.\! this limit, and likewise we mean by ``a constant'' a quantity that does not depend on $n$.} and comes with the following set of parameters.
\begin{itemize}
\item A geometric ground space, which we assume to be the $d$-dimensional torus $\mathbb{T}^d = \mathbb{R}^d / \mathbb{Z}^d$ for some constant $d \geq 1$.
\item A (non-necessarily metric) distance function on $\mathbb{T}^d$, cf.~below.
\item For every $n$ a sequence $\mathbf{w} = (w_1,\ldots,w_n)$ of $n$ positive weights\footnote{we suppress from the notation that $\mathbf{w}$ depends on $n$}, such that a weak power law condition for some constant power law exponent $\beta \in (2,3)$ is satisfied, see~\eqref{PL1} and~\eqref{PL2} below.
\item For each $n$ a function $p = p(x_u,x_v,w_u,w_v) \in [0,1]$ that assigns to each quadruple of two positions and two weights a probability and that satisfies~\eqref{EP} below for some constant parameter $\alpha >1$.
\end{itemize}
Before we elaborate on the precise requirements of the parameters, let us define the model. A random graph in the GIRG model on the vertex set $V=\{1,\ldots,n\}$ is obtained by the following procedure. Each vertex $v$ is equipped with \emph{weight $w_v$}, and every vertex draws independently a uniformly randomly \emph{position $x_v \in \mathbb T^d$}. Afterwards, for each pair $1 \leq u <v \leq n$ of different vertices we independently flip a coin, and insert the edge $\{u,v\}$ with probability $p_{uv} := p(x_u,x_v,w_u,w_v)$.


\subsection{Requirements of the GIRG model.}
\paragraph*{Distance Function.}
For simplicity we require the distance function to be translation invariant and symmetric. I.e., we can describe it by a measurable function $\dist{.} : \mathbb{T}^d \rightarrow \mathbb{R}_{\geq 0}$, where $\dist{x} = \dist{-x}$ for all $x\in\mathbb{T}^d$, and 
$\dist{0} = 0$. Then we define the distance of $x,y \in \mathbb{T}^d$ as $\dist{x-y}$, where $x-y \in \mathbb{T}^d$ is taken in the additive group $(\mathbb{T}^d,+)$. We additionally require that the mapping $V: \mathbb{R}_0^+ \rightarrow [0,1]; V(r) := \Vol(\{x \in 
\mathbb{T}^d | \, \dist{x} \leq r \})$ is surjective.

Note crucially that $\dist{.}$ does not need to be a norm.

\paragraph*{Edge probabilities.}
Let $\alpha > 1$ be a constant.\footnote{This constant essentially enforces convergences. Variations with $\alpha \leq 1$ are possible, but make the requirements on the model more technical, see~\cite{BKLgeneral}. Similarly, in~\cite{BKLeuclidean} also the threshold case $\alpha = \infty$ was considered. We omit both variants for the sake of simplicity.} We require that the edge probability function $p(x_u, x_v,w_u,w_v)$ satisfies for all $x_u \neq x_v \in \mathbb{T}^d$ and all $w_u,w_v > 0$:
\begin{equation}
 p(x_u, x_v,w_u,w_v) = \Theta \left( \min \left\{ 1, 
\left( \frac{w_uw_v}{n\cdot V(\dist{x_u - x_v})}\right)^\alpha \right\}\right), \label{EP} \tag{EP}
\end{equation}
where as before we denote $V(r) := \Vol(\{x \in \mathbb{T}^d | \, \dist{x} \leq r \})$. Since only $x_u$ and $x_v$ are random, we also write $p_{uv} := p_{uv}(x_u,x_v) := p(x_u, x_v,w_u,w_v)$.

The bounds represented by $\Theta$ are assumed to be global, i.e. there are
constants $c_L, c_U > 0$ such that for sufficiently large $n$ the inequalities
\begin{align}
p_{uv}(x_u, x_v) & \geq c_L \min \left\{1,\left( \frac{w_uw_v}{n\cdot V(\dist{x_u - x_v})}\right)^\alpha \right\} \text{ and} \label{EPL} \tag{EPL}\\
p_{uv}(x_u, x_v) &\leq c_U  \min \left\{1, \left( \frac{w_uw_v}{n\cdot V(\dist{x_u - x_v})}\right)^\alpha \right\}             \label{EPU} \tag{EPU}
\end{align}
hold for all $x_u \neq x_v \in \mathbb{T}^d$ and all $w_u,w_v > 0$.

We remark that~\eqref{EP} implies that a vertex $v$ with weight $w_v$ has expected degree $\Ex[\deg(v)] = \Theta(w_v)$, and two vertices $u,v$ with weights $w_u,w_v$, respectively, have probability $\Pr[\{u,v\} \in E] = \Theta(\min\{1,w_uw_v/n\})$ to be adjacent~\cite{BKLgeneral}.

\paragraph*{Power Law Weights.\footnote{We remark that all the proofs in the paper go through under much weaker assumptions. More precisely, the power law condition~\eqref{PL2} could be replaced by the weaker requirement that $\tfrac1n \sum{w_i} = O(1)$ and $\tfrac1n \sum{w_i^2} \to \infty$. However, since we want to cite results from~\cite{BKLgeneral} (in particular, the existence of a giant component), we stick with the conditions from~\cite{BKLgeneral}. Note that some properties that make the model popular would not hold without power law weights, for example the graphs would not be (ultra-)small worlds in general.}}
Let $2 < \beta < 3$ be a constant. We demand that
\begin{equation}
w_\text{min} := \min \{w_v \mid v \in V \} = \Omega(1), \label{PL1} \tag{PL1} 
\end{equation}
and that there exists $\bar{w} = \bar{w}(n) \geq n^{\omega(1/\log \log n)}$ 
such that for all constants $\eta > 0$ there are $c_1, c_2 > 0$ such that for all sufficiently large $n$,
\begin{equation}
c_1\frac{n}{ w^{\beta -1 + \eta} } \leq \#\{v \in V |\, w_v \geq w \} \leq c_2 
\frac{n}{ w^{\beta - 1 - \eta} }, \label{PL2} \tag{PL2}
\end{equation}
where the second inequality holds for all $w \geq w_\text{min}$, and the first one 
only holds for all $w_\text{min} \leq w \leq \bar{w}$. These conditions directly
imply that the total weight $W := \sum_{v \in V} w_v$ is in $\Theta(n)$~\cite[Lemma 4.2]{BKLgeneral}. 



\subsection{Minimum Component Distance.}
Since we are dealing with a torus, we define the absolute difference for 
$a, b \in [0,1]$ as
\begin{equation}
|a-b|_T := \min\{|a-b|, 1 - |a-b|\}.
\end{equation}
For $x,y \in \mathbb{T}^d$, the minimum component distance MCD can then be expressed as
\begin{align}
\dist{x-y}_\text{min} &:= \min_{1\leq i \leq d} |x_i -y_i|_T.
\end{align}
Note that the MCD coincides with the euclidean distance on $\mathbb{T}^1$ 
and that MCD is not a norm for $d > 1$, since the triangle inequality does not hold. While the volume function $V(r)$ for the euclidean norm
grows as $V_\text{eu}(r) = \Theta(r^d)$ for $0\leq r \leq 1/2$, for MCD it is given by $1 - (1-2r)^d$.
As this is somewhat unwieldy, we observe that $2r \leq 1 - (1-2r)^d \leq 2dr$.
Therefore, if we somewhat abusively redefine $V_\text{min}(r) := r$, we are off by at most a
constant factor; this does not change the model, because such constant
factors are swallowed for by the $\Theta$-notation in~\ref{EP}.


\subsection{Notation}
We denote the $i$-th component of a vector $e$ by $e_i$. In particular, for the position $x_v \in \mathbb{T}^d$ of vertex $v$ we denote by $x_{vi}$ the $i$-th component, for $1 \leq i \leq d$. For $r \geq 0$ and $x \in \mathbb{T}^d$, let $B_r(x) := \{y \in 
\mathbb{T}^d | \, \dist{x-y} \leq r \}$ denote the $r$-ball around $x$ with 
respect to $\dist{.}$, and we let $V(r) := r = \Theta(\Vol(B_r(x)))$ whenever we are concerned with MCD. We denote by $w_\text{min}$ the minimum weight, and by $W := \sum_{v\in V} w_v$ the 
total weight.
For convenience, we remind the reader that $\alpha >1$ is the convergence-enforcing constant of the GIRG model, and that $\beta \in (2,3)$ is the power law exponent. The position and weight of a vertex $v$ are denoted by $x_v$ and $w_v$, respectively. The connection probability of two vertices $u$ and $v$ is $p_{uv} := p(x_u,x_v,w_u,w_v)$.

We say that a family of events $\mathcal{E}_n$ holds \emph{with high probability} (w.h.p.) if $\Pr[\mathcal E_n] \to 1$ as $n\to\infty$. If $V_1,V_2$ are two sets of vertices in a graph $G$, then we denote by $E(V_1,V_2)$ the set of edges from $V_1$ to $V_2$.

By default, in the proofs we use $n$ for the number of vertices of the GIRG that we study.

\section{Our Result}\label{secResult}
The main result of this paper is that the giant component in the MCD model is stable under the removal of a sublinear number of edges. 
It is known that w.h.p.\! euclidean GIRGs have
small (edge) separators of size $n^{1-\eps}$~\cite{BKLeuclidean}. In contrast, we will prove that for MCD-GIRGs, w.h.p.\! the removal of any sublinear number of edges only decreases the size of the largest component by a sublinear number of vertices. In particular, this shows that the existence of small separators in the GIRG model depends on the properties of the underlying geometry. 

In order to formulate the precise statement, we introduce some
terminology.
\begin{definition}
For a graph $G = (V,E)$ and $\delta, \eta > 0$, a \emph{$(\delta, \eta)$-cut} is
a partition of $V$ into two sets of size at least $\delta|V|$ such that there
are at most $\eta|V|$ \emph{cross-edges} (i.e. edges that have one endpoint in
each of the sets).
\end{definition}
Somewhat loosely speaking,
we say that a family of graphs has \emph{small separators} if for a graph with $n$ vertices
it is possible to split its giant component into parts of linear size 
by removing only $o(n)$ edges. Note that
we always refer to splitting the giant component and not the whole graph;
this is necessary because a GIRG of size $n$ has $\Omega(n)$
isolated vertices w.h.p., yielding a trivial cut between isolated and non-isolated
vertices. According to Theorem \ref{compSize} below, w.h.p. there is only one component of size $\Omega(n)$, which justifies the term ``giant component''.

More formally, we will consider $(\delta, \eta)$-cuts of the largest component of $G=(V,E)$.
We will use the convention that the two sets into which that component is partitioned
still need to have size at least $\delta |V|$, and the cut must have at most $\eta |V|$
cross-edges (i.e. we do not use the size of the component instead of $|V|$).

Our main theorem states that w.h.p.\! MCD-GIRGs for dimension $d\geq 2$ have no small separators. 
\begin{theorem}[Main Theorem]
\label{mainThm}
Let $G$ be a graph on $n$ vertices drawn according to the GIRG model with minimum component distance and with dimension $d\geq 2$. Then, for every
$\delta > 0$ there is an $\eta > 0$ such that w.h.p.\! the largest
component of $G$ has no $(\delta, \eta)$-cut.
\end{theorem}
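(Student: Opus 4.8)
The plan is to adapt the two-round argument of {\L}uczak and McDiarmid (Lemma~\ref{LMD}), replacing the two \emph{independent} edge batches — which do not exist for a general GIRG — by two \emph{groups of coordinates}, which is possible precisely because $d\ge 2$. Since $V_{\min}(r)=r$, the minimum over coordinates in~\eqref{EP} becomes a maximum over one-dimensional connection probabilities: with $q_i:=\min\{1,(w_uw_v/(n\,|x_{ui}-x_{vi}|_T))^\alpha\}$ one has $\max_i q_i\le 1-\prod_i(1-q_i)\le\sum_i q_i\le d\max_i q_i$, so up to constants a GIRG satisfying~\eqref{EPL}--\eqref{EPU} contains, after a suitable coupling, a union $G_1\cup\dots\cup G_d$ of $d$ \emph{independent} one-dimensional GIRGs, where $G_i$ uses only the $i$-th coordinates of the positions. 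I would then expose $G_1,\dots,G_{d-1}$ first; this is again a GIRG of Chung--Lu type, so by Theorem~\ref{compSize} it has, w.h.p., a unique giant component $C$ of size $\Theta(n)$, with all other components of polylogarithmic size. The copy $G_d$ is kept back as the fresh, independent second batch.

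By Lemma~\ref{LMD} applied to the connected graph $C$, there are at most $e^{O(\eta n\log(1/\eta))}$ partitions of $C$ with at most $\eta n$ cross-edges. A $(\delta,\eta)$-cut of the final giant induces such a partition: moving all vertices outside the final giant to one side creates no new cross-edges, so one gets a partition of $V$ into two parts of size $\ge\delta n$ with $\le\eta n$ cross-edges, hence with at most $\eta n$ first-batch cross-edges inside $C$. It therefore suffices to show that any fixed partition $(L,R)$ of $V$ with both parts of linear size is destroyed by $G_d$ — i.e.\ $G_d$ has an edge between $L$ and $R$ — except with probability $e^{-\Omega_\delta(n)}$; a union bound over the $e^{O(\eta n\log(1/\eta))}$ candidate cuts then closes the argument, once $\eta=\eta(\delta)$ is chosen small enough and once one knows that $G_d$ is independent of the data determining $C$ and its sparse cuts.

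The geometric point — and the only place $d\ge 2$ is used — is that every \emph{fixed} partition $(L,R)$ of $V$ into linear-sized parts satisfies $\Ex[\#\{\text{edges of }G_d\text{ between }L\text{ and }R\}]=\Omega_\delta(n)$. Restricting to the at least $(1-\delta/2)n$ vertices of weight at most a large constant $K=K(\delta)$ (which exist by~\eqref{PL2}) and to pairs at $d$-th coordinate distance $O(1/n)$, for which the connection probability in~\eqref{EPL} is $\Omega(1)$, both $L$ and $R$ contain $\Omega(n)$ such ``light'' vertices and the expected number of $G_d$-edges among such close light cross-pairs is already $\Omega(\delta^2 n)$. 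If instead the partition were allowed to depend on $G_d$ — which is forced when $d=1$, since then no layer is held back — the adversary could split $\mathbb{T}^1$ into $[0,\tfrac12)$ and $[\tfrac12,1)$, which has only $n^{1-\Omega(1)}$ close cross-pairs because $\alpha>1$; this is exactly why euclidean GIRGs \emph{do} have small separators. To upgrade the expectation to the required tail bound I would lower-bound the number of edges between $L$ and $R$ by the number of light cross-pairs whose $d$-th coordinates fall into a common cell of a fixed partition of $\mathbb{T}^1$ into $\Theta(n)$ cells of length $\Theta(1/n)$, and estimate the latter by balls-into-bins: with probability $1-e^{-\Omega(n)}$ a constant fraction of cells receives a point of $L$, and then a linear number of points of $R$ land in those cells except with probability $e^{-\Omega_\delta(n)}$. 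Carrying out this estimate while the count is exposed adaptively, and (in the variant that retains the high-weight vertices) absorbing the large fluctuations they induce through their positions, is where the two-sided Azuma inequality of~\cite{BKLgeneral,combes15} is needed.

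I expect the main obstacle to be the dependency structure flagged in the introduction, which appears in two guises. First, $C$, its family of sparse cuts, and the randomness of $G_d$ all depend on the vertex positions, so one must expose coordinates $1,\dots,d-1$ (and the first-batch coins) \emph{before} coordinate $d$ (and the $G_d$-coins), via the uncovering procedure of Section~\ref{secTools}, so that $G_d$ is genuinely independent of the — now fixed — family of candidate cuts. Second, the first batch leaves $\Omega(n)$ vertices outside $C$ in polylogarithmically small components, many of which $G_d$ may merge into the final giant, and one cannot afford a $2^{\Omega(n)}$ union bound over the ways of assigning these small components to the two sides of a cut; instead the reduction from a cut of the final giant to a fixed partition of $C$ should expose $G_d$ in a breadth-first manner starting from $C$, charging each small component that joins the giant against the cross-edges it creates, and keeping the resulting counters concentrated along this adaptive exposure again via the two-sided Azuma. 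This last point is, I expect, the technical core of the paper.
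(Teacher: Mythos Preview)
Your coordinate-splitting idea is correct and is the paper's starting point: insert edges using \eqref{LB1} over coordinates $1,\dots,d-1$ to obtain a first-batch graph $G_1$ with giant $K_{\max}^1$, and keep coordinate $d$ via \eqref{LB2} as an independent reservoir. But from there the paper does \emph{not} hold back all of $G_d$ as the second batch, precisely because of the obstacle you flag at the end: the $d$-th layer contributes a $\Theta(1/d)$ fraction of all edges, so the giant grows by $\Theta(n)$ vertices in the last step, and a $(\delta,\eta)$-cut of the final giant need not restrict to a balanced cut of $K_{\max}^1$. You then cannot enumerate candidate cuts by applying Lemma~\ref{LMD} to $K_{\max}^1$, and the $2^{\Omega(n)}$ ways of assigning the outside components to the two sides kill the union bound. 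Your proposed BFS-and-charging workaround is not fleshed out; in particular it is unclear how you would keep the set of candidate cuts measurable with respect to the first batch while still letting the second batch act on a \emph{fixed} partition, and the paper explicitly argues (Section~\ref{secComp}) that this edge-batching route does not go through.

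The missing idea is to uncover \emph{vertices} rather than \emph{coordinates} in the last round. After forming $G_1$, the paper picks a \emph{small} random set $F\subseteq K_{\max}^1$ of bounded-weight vertices with $|F|=\Theta(fn)$, $f\ll\delta$, then uncovers the $d$-th coordinate of every vertex in $V\setminus F$ and inserts all edges among them; call the result $G_3$ with giant $K_{\max}^3\supseteq F$. The only fresh randomness left is the $d$-th coordinates of the vertices of $F$. Because $F$ is small and of bounded weight, the final step adds at most $3\delta n$ vertices to the giant (Lemma~\ref{l4}; this is where the two-sided Azuma of Theorem~\ref{thm18} and a random-walk coupling, Lemmas~\ref{l2}--\ref{lEdgeNum}, are actually used), so a $(4\delta,\eta)$-cut of the final giant restricts to a $(\delta,\eta)$-cut of $K_{\max}^3$. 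Lemma~\ref{LMD} is then applied to $K_{\max}^3$, not to $K_{\max}^1$: there are at most $(1+\varepsilon)^n$ candidate cuts, all determined before $F$'s $d$-th coordinates are drawn. For each candidate cut, the balls-into-bins argument you sketch is exactly Lemma~\ref{l1}/Corollary~\ref{cor1}: the already-uncovered $d$-th coordinates of $V\setminus F$ are spread evenly enough that each $v\in F$ independently hits the opposite side with probability $\Omega(1)$, giving $\Omega(fn)$ cross-edges with failure probability $e^{-\Omega(n)}$ and closing the union bound (Lemma~\ref{lCut}). The point is that by withholding only $F$'s last coordinate, one simultaneously keeps the second batch independent of the enumerated cuts \emph{and} small enough that the giant barely moves.
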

Note that for $d=1$, the minimum component distance agrees with the euclidean distance. Hence, for $d=1$ there exist sublinear separators.\footnote{For euclidean GIRGs, the separator is constructed by taking two parallel, axis-parallel half-planes, and removing all edges across these two half-planes. By~\cite[Theorem 2.5]{BKLeuclidean}, w.h.p. the number of edges across each axis-parallel half-plane is at most $n^{1-\eps +o(1)}$ for $\eps := \min\{\alpha -1, \beta-2, 1/d\}$.}

\section{Proof Outline}
\label{secComp}

In this section we will review the classic approach by {\L}uczak and McDiarmid~\cite{LMD} for Erd\H{o}s-R\'enyi random graphs $G(n,p)$ with edge probability $p=c/n$ where $c>1$ is a constant (for $c<1$ there is no giant component). Then we will explain why the proof cannot be straightforwardly transferred to our setting, and how we can overcome the obstacles. A key ingredient for both proofs is the following lemma, which states that in every connected graph there are only relatively few sparse cuts.
\begin{lemma}[{Cut bound~\cite[Lemma 7]{LMD}}]
\label{LMD}
For every $\varepsilon > 0$ there exist $\eta > 0$ and $n_0$ such 
that the following holds. For all $n \geq n_0$, and for all connected graphs 
$G$ with $n$ vertices, there are at most $(1+\varepsilon)^n$ bipartitions of 
$G$ with at most $\eta n$ cross-edges.
\end{lemma}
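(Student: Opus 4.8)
The plan is to reduce counting sparse cuts in $G$ to a rigid counting problem on a spanning tree. First I would fix an arbitrary spanning tree $T$ of $G$; this exists because $G$ is connected, and $T$ has exactly $n-1$ edges. Now let $(A,B)$ be any bipartition of $V$ with at most $\eta n$ cross-edges in $G$, and set $S := E(A,B) \cap E(T)$, the set of tree edges crossing the cut, so $k := |S| \le \eta n$. Deleting $S$ from $T$ leaves a forest with exactly $k+1$ components, and because $S$ contains \emph{every} tree edge with endpoints on opposite sides of the cut, each of these components lies entirely in $A$ or entirely in $B$. Hence $(A,B)$ is recovered from the pair consisting of (i) the deleted edge set $S \subseteq E(T)$ with $|S| \le \eta n$, and (ii) a choice, for each of the $k+1$ resulting subtrees, of whether it goes to $A$ or to $B$. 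Consequently the number of bipartitions with at most $\eta n$ cross-edges is at most
$$\sum_{k=0}^{\lfloor \eta n \rfloor} \binom{n-1}{k}\, 2^{k+1}.$$

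Next I would estimate this sum. For $k \le \eta n$ we have $2^{k+1} \le 2^{\eta n + 1}$ and $\binom{n-1}{k} \le \binom{n}{k}$, so the sum is at most $2^{\eta n + 1}\sum_{k=0}^{\lfloor \eta n\rfloor}\binom{n}{k}$. Applying the standard entropy estimate $\sum_{k=0}^{\lfloor \eta n\rfloor}\binom{n}{k} \le 2^{n H(\eta)}$, valid for $\eta \le 1/2$ with $H(\eta) = -\eta\log_2\eta - (1-\eta)\log_2(1-\eta)$, we get the upper bound $2 \cdot 2^{n(\eta + H(\eta))}$. Writing $f(\eta) := \eta + H(\eta)$, we have $f(\eta) \to 0$ as $\eta \to 0^+$, so given $\varepsilon > 0$ I would pick $\eta \in (0, 1/2)$ small enough that $2^{f(\eta)} < 1 + \varepsilon$. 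Then $2 \cdot 2^{n f(\eta)} = 2\big(2^{f(\eta)}\big)^n \le (1+\varepsilon)^n$ for all $n \ge n_0$, for a suitable $n_0$, since $2^{f(\eta)}/(1+\varepsilon) < 1$ forces $2\big(2^{f(\eta)}/(1+\varepsilon)\big)^n \to 0$. This yields the claimed bound.

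I do not expect a serious obstacle: the argument is a clean counting bound once one hits upon passing to a spanning tree. The two points needing care are (i) checking that each component of $T \setminus S$ is monochromatic with respect to the cut — which is exactly why we may take $S$ to be \emph{all} crossing tree edges, and hence why the encoding ``(deleted edge set, $2$-colouring of the pieces)'' really does surject onto the set of sparse cuts — and (ii) restricting to $\eta < 1/2$ so that the binomial-sum entropy bound applies, which is harmless since only small $\eta$ matter. Minor conventions (ordered versus unordered bipartitions, forbidding empty parts) change the count by at most a factor $2$, absorbed into the leading constant and hence into $n_0$.
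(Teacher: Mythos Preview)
The paper does not prove this lemma; it is quoted as a black box from {\L}uczak and McDiarmid~\cite{LMD}, so there is no in-paper proof to compare against. Your argument is correct and is in fact essentially the original proof from~\cite{LMD}: pass to a spanning tree, observe that a bipartition with at most $\eta n$ cross-edges in $G$ cuts at most $\eta n$ tree edges, and encode the bipartition by the set of cut tree edges together with a $2$-colouring of the resulting $\le \eta n + 1$ subtrees, then bound the count by $\sum_{k \le \eta n}\binom{n-1}{k}2^{k+1}$ and finish with the entropy estimate. Your two cautionary notes (that $S$ must be taken as \emph{all} crossing tree edges so that components are monochromatic, and that $\eta < 1/2$ is needed for the tail bound) are exactly the right places to be careful, and both are handled correctly.
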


\paragraph{Erd\H{o}s-R\'enyi Graphs.}
With Lemma~\ref{LMD}, the proof for the Erd\H{o}s-R\'enyi case becomes rather straightforward. Our goal is to show that for every $\delta > 0$, there is $\eta > 0$ such that w.h.p.\! the giant of $G$ has no $(2\delta, \eta)$-cut (the factor 2 is chosen for convenience of exposition). We fix $\gamma = \gamma(\eta,c)$ sufficiently small, and uncover the edges in two batches, where in the first batch we insert each edge with probability $p_1 := (c-\gamma)/n$, thus obtained $G'$. Afterwards, in the second batch we insert each edge with probability $p_2 \approx \gamma/n$, where $(1-p_1)(1-p_2) = (1-p)$. Note that the resulting graph is distributed as $G(n,p)$.

By Lemma~\ref{LMD}, the number of $(\delta, \eta)$-cuts of the giant component of $G'$ is at most $(1+\varepsilon)^n$. By a standard Chernoff bound, each such cut will receive at least $\eta n$ cross-edges from the second batch with probability $1-e^{-\Omega(n)}$, if $\eta >0$ is sufficiently small. Thus we can afford a union bound over all $(1+\eps)^n$ such cuts in $G'$ if $\eps>0$ is sufficiently small, and w.h.p. all of them will be destroyed in $G$ (i.e., all of them will have at least $\eta n$ cross-edges in $G$). Finally, let $K_\text{max}$ and $K_\text{max}'$ denote the vertex set of the giant component of $G$ and $G'$, respectively. If $\gamma$ is small enough then w.h.p.\! $|K_\text{max} \setminus K_\text{max}'| < \delta n$. In this case every $(2\delta,\eta)$-cut of $K_\text{max}$ in $G$ would give rise to a $(\delta, \eta)$-cut of $K_\text{max}'$ in $G$ by deleting the vertices in $K_\text{max} \setminus K_\text{max}'$. Since we have shown that w.h.p.\! the latter does not exist, there is no $(2\delta,\eta)$-cut of $K_\text{max}$ in $G$.

\paragraph{MCD-GIRG graphs}
While we will still use Lemma~\ref{LMD} for the MCD-GIRG case, the remainder of the proof does not carry over. Firstly, it is not possible to generate the edge set by two independent batches, a large one and a small one. In fact, it \emph{is} possible to generate the graph in two ``almost'' independent batches, by first uncovering all edges that ``come from'' any of the first $d-1$ components of the positions, giving a graph $G_1$, and afterwards uncovering all edges that ``come from'' the last component of the position. We will make use of this two-stage process in the proof, see~\eqref{LB1} and~\eqref{LB2} in Section~\ref{secLB}. However, unlike in the Erd\H{o}s-R\'enyi case, the second batch of edges is not small: it comprises for roughly a $1/d$ fraction of all edges, and the giant component will grow substantially in this last step. In particular, we cannot avoid that many new bipartitions of the giant component are created by the second batch. 

The basic idea is, instead of uncovering the \emph{edges} in batches, we rather want to uncover the \emph{vertices} in batches. However, we only want to use vertices for the second batch that are contained in the giant component, as otherwise the incident edges would not count towards the cut across the giant component. These objectives seem contradictive, but fortunately it is possible to reconcile them by first uncovering partial geometric information. More precisely, as indicated above we first uncover the first $d-1$ components of each position, and draw edges conservatively according to these coordinates, i.e., we never overestimate the probability of an edge to be in $G$. This gives us a graph $G_1$ which has a giant component w.h.p.. Within the vertices of bounded weight in this giant component we draw randomly a small (but linear size) subset $F$. The yet uncovered incident edges to $F$ will serve as the second batch.

Next we uncover the full geometric information of all vertices in $V\setminus F$, and insert all edges between these vertices. In the proof, this
intermediate graph is denoted by $G_3$ (as $G_2$ is used for an earlier phase
which we skip over in this outline). Finally, we draw the $d$-th coordinate for
vertices in $F$, which allows us to add all edges incident to 
$F$, completing the graph. We show that these additional edges incident to $F$, which result
from vertices being close along the $d$-th coordinate, destroy all $(\delta, \eta)$-cuts
in the giant of $G_3$ (Lemma \ref{lCut}). This is less trivial than in the Erd\H{o}s-R\'enyi case, since the edge probabilities depend on the
(already uncovered) $d$-th coordinate of the vertices in $V\setminus F$. Thus the edge probability are strongly correlated. However, in Lemma~\ref{l1} we show that
the vertices in $V \setminus F$ are spread uniformly enough over the $d$-th dimension to alleviate this issue. 


Finally, we need to show that 
the edges incident to $F$ added in the last step do not increase the size
of the giant by too much (Lemma \ref{l4}). Again, this part is significantly more involved than in the
Erd\H{o}s-Rényi case, since not even the exact size of the giant component of a GIRG is known. 
We proceed in two steps. Firstly we show that the number of edges going of from $F$ is concentrated (Lemma \ref{lEdgeNum}); this already requires an Azuma-Hoeffding type bound with two-sided error that has only recently been developed (Theorem \ref{thm18}, originally from \cite{BKLgeneral}). 
Secondly, we show that the number of vertices in large non-giant components in $G_3$ (Lemma~\ref{l2}) can be coupled to the result of a random walk if in each step the $d$-th coordinates of the uncovered vertices are sufficiently uniformly spread. In this way we derive an upper bound on the number of vertices in such components. Together, this implies that on average each edge incident to $F$
adds only a constant number of vertices to the giant. This allows us to show that w.h.p.\! the edges uncovered in the last step increase the giant component only by $\delta n$ vertices. The theorem then follows analogously to the Erd\H{o}s-R\'enyi case.

\section{Preliminaries and Tools}
\label{secTools}

\subsection{Tools}
We start with some basic tools. Concerning GIRGs, we mainly need the existence of a giant component:
\begin{theorem}[{Component Structure of GIRGs,~\cite[Theorems 5.9 + 7.3]{BKLgeneral}}]
\label{compSize}
\hfill 
\begin{enumerate}[(i)]
\item There is a constant $s_\text{max}$ such that with high probability, there 
is a connected component containing at least $s_\text{max} n = \Omega(n)$ 
vertices.
\item With high probability, all other components have at most $\log^{O(1)} n$ vertices 
(polylogarithmic size).
\end{enumerate}
\end{theorem}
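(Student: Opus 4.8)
The plan is to exploit that MCD-GIRGs are of Chung--Lu type and to work with the \emph{marginal} edge probabilities obtained by integrating out the positions: for vertices $u,v$ one has $\Pr[\{u,v\}\in E]=\Theta(\min\{1,w_uw_v/n\})$. To recover the independence needed for concentration arguments, I would partition the torus $\mathbb{T}^d$ into $\Theta(n)$ congruent cells and expose the randomness cell by cell, so that positions within a cell and edges joining a fixed pair of cells are independent of the rest. Two structural facts drive everything: $W=\Theta(n)$ (bounded average weight), and, since $\beta<3$, $\sum_{v\in V}w_v^2=\omega(n)$, so the ``effective branching ratio'' $\sum_v w_v^2/\sum_v w_v$ diverges.

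\textbf{Part (i): the giant.} Restrict attention to the set $U$ of vertices of weight in $[\wmin,C]$ for a large constant $C$; by~\eqref{PL2} we have $|U|=\Theta(n)$, and between two such vertices the marginal edge probability is $\Theta(1/n)$. Run a breadth-first exploration from a uniformly random vertex of $U$. As long as only $o(n)$ vertices have been reached, the number of newly discovered neighbours stochastically dominates a random variable whose mean exceeds any prescribed constant (using $W=\Theta(n)$ and the diverging second moment of the weights), so the exploration dominates a supercritical Galton--Watson process for its first $\Theta(\log n)$ generations. Hence a fixed vertex lies in a component of size at least $K$ with probability bounded below by a constant, for every constant $K$; a second moment computation then upgrades this to: w.h.p.\ a $\Theta(1)$ fraction of all vertices lie in components of size $\geq K$. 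Finally one merges these: reserving the edges between designated pairs of nearby cells as an independent ``sprinkling'', any two vertex sets of linear size receive $\Theta(n)$ sprinkled edges in expectation, so w.h.p.\ all but an arbitrarily small linear fraction of the large-component vertices coalesce, yielding a single component of size $\geq s_\text{max}\,n$.

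\textbf{Part (ii): all other components are polylogarithmic, and the main obstacle.} First, a vertex of weight $\geq\log^{O(1)}n$ has degree $\omega(\log n)$ w.h.p.\ and, by a union bound, attaches to the giant; hence any non-giant component has total weight $<\log^{O(1)}n$, and since $\wmin=\Omega(1)$ this bounds its number of vertices by $\log^{O(1)}n$ --- \emph{provided} one rules out a connected, moderate-weight vertex set of large total weight forming its own component. Doing so is the crux: one bounds the number of candidate connected vertex sets $S$ of a given total weight (a connected light component must be geometrically clustered, so this count is at most exponential in $|S|$ with a base that can be controlled) and shows that the probability that $S$ sends no edge to the linearly many vertices that are geometrically close to it decays fast enough to survive the union bound over all such $S$ and all sizes $\log^{O(1)}n\le m\le s_\text{max}n/2$. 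The real difficulty throughout --- as the introduction stresses --- is that a GIRG exploration is \emph{not} a branching process: the probability that a freshly exposed edge reaches an already-visited vertex is non-negligible and depends on where the visited vertices sit. The genuine work is therefore the careful scheduling of which geometric and edge randomness to expose at each stage (the cell decomposition above being one such device) so that the events to which one applies Chernoff or union bounds are actually independent. In this paper we simply invoke the statement in the form proved in~\cite{BKLgeneral}.
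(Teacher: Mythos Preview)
The paper does not prove this theorem at all: it is stated in Section~\ref{secTools} as a tool and attributed to \cite[Theorems 5.9 + 7.3]{BKLgeneral}, with no argument given. Your final sentence (``In this paper we simply invoke the statement in the form proved in~\cite{BKLgeneral}'') is therefore exactly what the paper does, and everything before that sentence is extra material that has no counterpart here.

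On the sketch itself: the overall architecture (supercritical exploration plus sprinkling for~(i), union bound over candidate components for~(ii)) is the standard one, and it is indeed roughly how~\cite{BKLgeneral} proceeds. But two points of your outline are shakier than you let on. First, for part~(i) you lean on a Galton--Watson domination of the BFS; the paper's introduction explicitly warns that this coupling \emph{fails} for GIRGs because of clustering, and your cell-decomposition remark does not by itself repair it --- one still has to argue that enough of the offspring in each generation land in fresh cells, which is nontrivial and is precisely where~\cite{BKLgeneral} does real work. Second, for part~(ii) your claim that ``a connected light component must be geometrically clustered'' is false for the MCD (and even for Euclidean GIRGs with $\alpha<\infty$ long edges exist), so the counting of candidate sets $S$ cannot rely on geometric locality in the way you suggest; the actual argument in~\cite{BKLgeneral} instead exploits the heavy-weight core and bounds the probability that a moderate set avoids all edges to it. Since you ultimately defer to the citation, none of this is fatal, but the sketch as written would not stand on its own.
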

Note that $s_\text{max}$ depends on the parameters of the model, including the
factors hidden by $\Theta$ in the edge probability equation \eqref{EP}.

Moreover we need that the degree distribution follows a power law. (This is not surprising but recall that the definition only requires the \emph{weights} to follow a power law.) 
\begin{lemma}[{\cite[Theorem 6.3 and Lemma 4.5]{BKLgeneral}}]\label{thm:degrees}
For all $\eta > 0$ there exist constants $c_3, c_4 > 0$ such that w.h.p.
\begin{equation}\label{eq:powerlawdegree}
c_3\frac{n}{ c^{\beta -1 + \eta} } \leq \#\{v \in V |\, \deg(v) \geq c \}
\leq c_4  \frac{n}{ c^{\beta - 1 - \eta} }.
\end{equation}
where the second inequality holds for all $c \geq 1$ and the first one hold for
all $1 \leq c \leq \bar{w}$, where $\bar{w}$ is the same as in \eqref{PL2}. Moreover, there is a constant $C>0$ such that $\Ex[\deg(v)] \leq Cw_v$, and such that with probability $1-n^{\omega(1)}$
\begin{equation}
\deg(v)\leq C\cdot (w_v + \log^2 n)
\end{equation}
holds for all $v \in V$.
\end{lemma}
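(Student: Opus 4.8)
The lemma bundles three claims: (i) the two-sided power law for $\#\{v:\deg(v)\geq c\}$, (ii) the mean bound $\Ex[\deg(v)]\leq Cw_v$, and (iii) the upper-tail bound $\deg(v)\leq C(w_v+\log^2 n)$ with failure probability $n^{-\omega(1)}$. I would prove (ii), then (iii), and finally (i), which is assembled from the first two together with the power-law weight condition~\eqref{PL2}. For (ii), write $\deg(v)=\sum_{u\neq v}\mathbf 1[\{u,v\}\in E]$, so that $\Ex[\deg(v)]=\sum_{u\neq v}\Ex[p_{uv}]$ with the expectation taken over the positions. By translation invariance $\Ex[p_{uv}]=\Ex_z[p(\cdot)]$ where $z=x_u-x_v$ is uniform on $\mathbb{T}^d$, and by~\eqref{EPU} this is at most $c_U\,\Ex_z[\min\{1,(w_uw_v/(n\,V(\dist{z})))^\alpha\}]$. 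Splitting $\mathbb{T}^d$ according to whether $V(\dist{z})\leq w_uw_v/n$: the first region has volume $\Theta(w_uw_v/n)$ and integrand $1$; on the second the integrand is $(w_uw_v/(n\,V(\dist{z})))^\alpha$ and the integral converges because $\alpha>1$, again contributing $O(w_uw_v/n)$. Hence $\Ex[p_{uv}]=O(w_uw_v/n)$ and $\Ex[\deg(v)]=O(w_v\cdot W/n)=O(w_v)$ since $W=\Theta(n)$. The matching estimate $\Ex[\deg(v)]=\Omega(w_v)$, needed later, follows the same way from~\eqref{EPL}, summing $\min\{1,w_uw_v/n\}$ over the $\Theta(n)$ vertices $u$ of bounded weight and using $w_{\min}=\Omega(1)$.

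For (iii): after integrating out each $x_u$, the indicators $\mathbf 1[\{u,v\}\in E]$, $u\neq v$, become \emph{independent} Bernoulli variables — the shared position $x_v$ is irrelevant by translation invariance — with total mean $\Ex[\deg(v)]\leq C'w_v$. A Chernoff upper tail then gives $\Pr[\deg(v)>C(w_v+\log^2 n)]\leq\exp(-\Omega(w_v+\log^2 n))\leq\exp(-\Omega(\log^2 n))=n^{-\omega(1)}$ for $C$ a sufficiently large constant, and a union bound over the $n$ vertices preserves this.

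For (i), I would first estimate $\Ex[\#\{v:\deg(v)\geq c\}]=\sum_v\Pr[\deg(v)\geq c]$. For the upper bound, split the vertices into those with $w_v\geq c/(2C')$ and the rest: the former number at most $c_2 n/(c/(2C'))^{\beta-1-\eta}=O(n/c^{\beta-1-\eta})$ by~\eqref{PL2} and contribute at most $1$ each; each vertex in the latter group has $\Ex[\deg(v)]<c/2$, hence $\Pr[\deg(v)\geq c]\leq e^{-\Omega(c)}$ by Chernoff, so this group contributes at most $n\,e^{-\Omega(c)}=O(n/c^{\beta-1-\eta})$ because an exponential dominates any power of $c\geq 1$. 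For the lower bound (with $c\leq\bar{w}$), take the $\geq c_1 n/(Kc)^{\beta-1+\eta/2}=\Omega(n/c^{\beta-1+\eta})$ vertices with $w_v\geq Kc$ for a suitable constant $K$ (requiring $Kc\leq\bar{w}$); by the $\Omega(w_v)$-bound these satisfy $\Ex[\deg(v)]\geq 2c$, so a Chernoff lower tail gives $\Pr[\deg(v)\geq c]=\Omega(1)$ uniformly, whence the expectation is $\Omega(n/c^{\beta-1+\eta})$.

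It remains to upgrade these expectation bounds to w.h.p.\ statements holding \emph{simultaneously} for all $c$; since $\deg(v)\in\Z$ the count changes only at integers, so it suffices to treat $c\in\{1,\dots,n-1\}$ and take a union bound. This is the main obstacle, and it is where the usual difficulty of GIRGs surfaces: $\#\{v:\deg(v)\geq c\}$ is \emph{not} Lipschitz in the vertex positions in the McDiarmid sense, since relocating one vertex can flip the threshold-membership of $\Theta(n)$ other vertices (those of degree exactly $c-1$). I would handle this with the two-sided Azuma-type inequality, Theorem~\ref{thm18}: relocating a single vertex \emph{typically} changes the count only by $\log^{O(1)}n$ (a bounded-weight vertex has $\log^{O(1)}n$ incident edges by (iii)), while the rare large changes are absorbed using the a-priori degree bound of (iii) — this is exactly the regime the theorem is built for. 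An alternative route is a second-moment argument, using that $\{\deg(u)\geq c\}$ and $\{\deg(v)\geq c\}$ are nearly independent (interacting only through the single possible edge $\{u,v\}$ and, weakly, through the positions of the remaining vertices). Either way, making the ``typical Lipschitz constant'' or the ``nearly independent'' estimate quantitative and uniform in $c$ is the technical heart of the argument, and it is precisely the computation carried out in~\cite{BKLgeneral}.
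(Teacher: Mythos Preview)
The paper does not prove this lemma at all; it is stated with a citation to \cite[Theorem~6.3 and Lemma~4.5]{BKLgeneral} and used as a black box. Your sketch is a faithful outline of how that proof proceeds --- the marginal-probability computation $\Ex[p_{uv}]=\Theta(\min\{1,w_uw_v/n\})$ for (ii), conditional independence of the edge indicators given $x_v$ together with Chernoff for (iii), and the weight-class split plus a concentration argument for (i) --- and you yourself say at the end that this is ``precisely the computation carried out in~\cite{BKLgeneral}''. So there is nothing to compare against in the present paper; your proposal matches the cited source in spirit, and the only part you leave genuinely open (the uniform-in-$c$ concentration for (i)) is exactly the part deferred to \cite{BKLgeneral}.
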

Note that the exponents in \eqref{eq:powerlawdegree} are the same as for the weights (see \eqref{PL2}). 

Next we give some classical concentration inequality. In the proof we will use the following Chernoff bounds:
\begin{lemma}[{``Weak'' Chernoff bounds~\cite[Theorem 1.1]{dubhashi}}]
\label{wchb}
Let $X := \sum_{i=1}^n X_i$ where for all $1 \leq i \leq n$, the random 
variables $X_i$ are independently distributed in $[0,1]$. Then
\begin{enumerate}[(i)]
\item $\mathbb{P}[X > (1+\varepsilon)\mathbb{E}[X]] \leq \exp 
\left(-\frac{\varepsilon^2}{3} \mathbb{E}[X] \right)$, for all $0<\varepsilon<1$,
\item $\mathbb{P}[X < (1-\varepsilon)\mathbb{E}[X]] \leq \exp
\left(-\frac{\varepsilon^2}{2} \mathbb{E}[X] \right)$, for all $0<\varepsilon<1$,
and
\item $\mathbb{P}[X > t] \leq 2^{-t}$ for all $t > 2e\mathbb{E}[X]$.
\end{enumerate}
In particular, this holds when the $X_i$ are i.i.d. indicator random variables.
\end{lemma}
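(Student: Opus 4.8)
The plan is to prove all three bounds by the standard Bernstein/Chernoff exponential-moment method, so in the paper I would in fact just cite \cite{dubhashi} and include only this sketch for completeness. Write $\mu := \Ex[X]$. For part (i), fix $\lambda > 0$; by Markov's inequality applied to $e^{\lambda X}$ and by independence of the $X_i$,
\[
\Pr[X > (1+\varepsilon)\mu] \le e^{-\lambda(1+\varepsilon)\mu}\,\Ex\big[e^{\lambda X}\big] = e^{-\lambda(1+\varepsilon)\mu}\prod_{i=1}^n \Ex\big[e^{\lambda X_i}\big].
\]
Since each $X_i$ takes values in $[0,1]$ and $x \mapsto e^{\lambda x}$ is convex, the chord bound $e^{\lambda x} \le 1 + (e^\lambda - 1)x$ on $[0,1]$ together with $1 + y \le e^y$ gives $\Ex[e^{\lambda X_i}] \le \exp\big((e^\lambda - 1)\Ex[X_i]\big)$, hence $\Ex[e^{\lambda X}] \le \exp\big((e^\lambda - 1)\mu\big)$ and
\[
\Pr[X > (1+\varepsilon)\mu] \le \exp\big(\mu\,(e^\lambda - 1 - \lambda(1+\varepsilon))\big).
\]
Optimising with $\lambda = \ln(1+\varepsilon)$ yields the classical bound $\exp\big(-\mu\,[(1+\varepsilon)\ln(1+\varepsilon) - \varepsilon]\big)$, from which (i) follows via the elementary inequality $(1+\varepsilon)\ln(1+\varepsilon) - \varepsilon \ge \varepsilon^2/3$ for $0 < \varepsilon < 1$ (Taylor-expand the left-hand side as $\tfrac{\varepsilon^2}{2} - \tfrac{\varepsilon^3}{6} + \cdots$ and bound the alternating tail, or compare derivatives).

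For the lower tail (ii) I would run the mirror-image argument on $e^{-\lambda X}$: Markov gives $\Pr[X < (1-\varepsilon)\mu] \le e^{\lambda(1-\varepsilon)\mu}\,\Ex[e^{-\lambda X}]$, the same convexity estimate (now $e^{-\lambda x} \le 1 + (e^{-\lambda} - 1)x$ on $[0,1]$) gives $\Ex[e^{-\lambda X}] \le \exp\big((e^{-\lambda} - 1)\mu\big)$, and the choice $\lambda = -\ln(1-\varepsilon) > 0$ produces $\Pr[X < (1-\varepsilon)\mu] \le \exp\big(-\mu\,[(1-\varepsilon)\ln(1-\varepsilon) + \varepsilon]\big)$; the claimed form follows from $(1-\varepsilon)\ln(1-\varepsilon) + \varepsilon \ge \varepsilon^2/2$ on $(0,1)$.

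Part (iii) reuses the bound $\Pr[X > a] \le e^{-\lambda a}\exp\big((e^\lambda - 1)\mu\big)$ derived above, but with the large-deviation choice $\lambda = \ln(a/\mu) > 0$ (valid whenever $a \ge \mu$), which gives $\Pr[X > a] \le (\mu/a)^a e^{a - \mu} \le (e\mu/a)^a$. Setting $a = t$ and using $t > 2e\mu$, i.e.\ $e\mu/t < 1/2$, yields $\Pr[X > t] \le (e\mu/t)^t < 2^{-t}$. The final sentence of the lemma is immediate, since i.i.d.\ indicator variables are a special case of independent $[0,1]$-valued summands.

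There is no genuine obstacle here: the statement is entirely classical. The only two points that are not purely mechanical are the scalar inequalities $(1+\varepsilon)\ln(1+\varepsilon) - \varepsilon \ge \varepsilon^2/3$ and $(1-\varepsilon)\ln(1-\varepsilon) + \varepsilon \ge \varepsilon^2/2$ on $(0,1)$, and the observation that the summands need only be bounded rather than Bernoulli — which is exactly what the chord estimate $e^{\lambda x} \le 1 + (e^\lambda - 1)x$ handles, so the Bernoulli computation goes through verbatim.
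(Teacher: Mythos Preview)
Your proof is correct and entirely standard; it is precisely the exponential-moment argument one finds in the cited reference \cite{dubhashi}. The paper itself does not prove this lemma at all --- it is stated as a tool and simply attributed to \cite[Theorem 1.1]{dubhashi} --- so there is nothing to compare against beyond noting that your sketch reproduces the classical derivation.
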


In one case, this simple version will not do, and we need to use the stronger form:
\begin{lemma}[{``Strong'' Chernoff bound~\cite[Theorem 2.15]{ChungLu}}]
\label{schb}
Let $X:= \sum_{i=1}^n X_i$ where for all $1 \leq i \leq n$, the random 
variables $X_i$ are independently distributed in~$\{0,1\}$ (i.e. they 
are indicator variables).
Then, for any $\epsilon > 0$ we have
\begin{equation}
\mathbb{P}[X \geq (1+\epsilon)\mathbb{E}[X] ] 
\leq \left( \frac{e^\epsilon}{(1+\epsilon)^{1+\epsilon}} \right)^{\mathbb{E}[X]}
\leq \left( \frac{e}{1+\epsilon} \right)^{(1+\epsilon)\mathbb{E}[X]}.
\end{equation}
\end{lemma}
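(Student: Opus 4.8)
The plan is to use the standard exponential moment (Chernoff--Bernstein) method. Write $\mu := \mathbb{E}[X] = \sum_{i=1}^n p_i$, where $p_i := \mathbb{E}[X_i] = \mathbb{P}[X_i = 1]$. For any $t > 0$, Markov's inequality applied to the random variable $e^{tX}$ gives
$\mathbb{P}[X \geq (1+\epsilon)\mu] = \mathbb{P}[e^{tX} \geq e^{t(1+\epsilon)\mu}] \leq e^{-t(1+\epsilon)\mu}\,\mathbb{E}[e^{tX}]$.
The next step is to exploit independence: since the $X_i$ are independent and take values in $\{0,1\}$, we have $\mathbb{E}[e^{tX}] = \prod_{i=1}^n \mathbb{E}[e^{tX_i}] = \prod_{i=1}^n (1 + p_i(e^t - 1))$. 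Applying the elementary inequality $1 + y \leq e^y$ termwise (with $y = p_i(e^t - 1)$) bounds this by $\exp((e^t-1)\sum_i p_i) = \exp((e^t-1)\mu)$. Combining the two displays yields, for every $t > 0$,
$\mathbb{P}[X \geq (1+\epsilon)\mu] \leq \exp(\mu(e^t - 1 - t(1+\epsilon)))$.

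It remains to optimize the exponent over $t$. The function $t \mapsto e^t - 1 - t(1+\epsilon)$ is convex and minimized where $e^t = 1+\epsilon$, i.e.\ at $t = \ln(1+\epsilon)$; this is admissible ($t>0$) precisely because $\epsilon > 0$, so the bound in fact holds for the entire stated range of $\epsilon$ and not merely $\epsilon < 1$. Substituting gives exponent $\mu(\epsilon - (1+\epsilon)\ln(1+\epsilon))$, hence
$\mathbb{P}[X \geq (1+\epsilon)\mu] \leq \exp(\mu(\epsilon - (1+\epsilon)\ln(1+\epsilon))) = \left(\frac{e^\epsilon}{(1+\epsilon)^{1+\epsilon}}\right)^{\mu}$,
which is the first claimed inequality. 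For the second, I would simply note that $e^\epsilon \leq e^{1+\epsilon}$, so $\frac{e^\epsilon}{(1+\epsilon)^{1+\epsilon}} \leq \frac{e^{1+\epsilon}}{(1+\epsilon)^{1+\epsilon}} = \left(\frac{e}{1+\epsilon}\right)^{1+\epsilon}$, and raising both sides to the power $\mu \geq 0$ preserves the inequality.

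Honestly, there is no real obstacle here: this is a textbook argument and the only points that deserve a line of care are that the optimal parameter $t = \ln(1+\epsilon)$ is legitimate for all $\epsilon > 0$, and the termwise use of $1 + y \leq e^y$ — both immediate. (One could alternatively quote the bound verbatim from the cited source~\cite{ChungLu}, but the self-contained derivation above is short enough to include.)
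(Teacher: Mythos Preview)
Your proof is correct and is the standard exponential-moment derivation. The paper itself does not prove this lemma: it is quoted directly from the cited source~\cite{ChungLu}, with only the remark that the second inequality is a ``trivial simplification''---which is exactly the one-line estimate $e^{\epsilon} \le e^{1+\epsilon}$ that you give.
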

The second inequality is a trivial simplification that will come in handy when 
we apply this lemma.

Finally, we will need a variant of the Azuma-Hoeffding bound with two-sided error event. The crucial difference to other versions of the Azuma-Hoeffding bound is that for \emph{both} the event $\omega$ and $\omega'$ we only need to consider ``good'' events.
\begin{theorem}[{Azuma-Hoeffding Bound with Two-Sided Error Events, \cite[Theorem 3.3]{BKLgeneral}}]
\label{thm18}
Let $Z_1, \ldots, Z_m$ be independent random variables over $\Omega_1, \ldots, 
\Omega_m$. Let $Z = (Z_1, \ldots, Z_m)$, $\Omega = \prod_{k=1}^m \Omega_k$ and 
let $g: \Omega \rightarrow \mathbb{R}$ be measurable with $0 \leq g(\omega) 
\leq M$ for all $\omega \in \Omega$. Let $\mathcal{B} \subseteq \Omega$ (the 
subset of ``bad'' events) such that for some $c > 0$ and for all $\omega, 
\omega^\prime \in \Omega\setminus \mathcal{B}$ that differ in at most two components 
we have
\begin{equation}
|g(\omega) - g(\omega^\prime)| \leq c.
\end{equation}
Then for all $t > 0$
\begin{equation}
\mathbb{P}\left[\lvert g(Z) - \mathbb{E}[g(Z)]\rvert \geq t \right]
\leq 2e^{-\frac{t^2}{32mc^2}} + \left(2\tfrac{mM}{c} +1\right)\mathbb{P}[\mathcal{B}]. 
\end{equation}
\end{theorem}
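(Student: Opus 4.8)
The plan is to reduce Theorem~\ref{thm18} to the classical Azuma--Hoeffding / bounded-differences inequality, which for a function $\tilde g$ obeying a genuine \emph{global} single-coordinate bounded-differences condition with constant $c'$ gives a tail of the form $2e^{-s^2/(2m(c')^2)}$. The difficulty is that $g$ satisfies such a condition only on the good set $\Omega\setminus\mathcal B$, and even there only for pairs of configurations that differ in at most \emph{two} coordinates. So the first goal is to manufacture a function $\tilde g\colon\Omega\to[0,M]$ that (a) agrees with $g$ outside a slightly enlarged bad set $\mathcal B^\ast\supseteq\mathcal B$, and (b) satisfies $|\tilde g(\omega)-\tilde g(\omega')|\le 2c$ whenever $\omega,\omega'$ differ in a single coordinate; the additive error term $(2\tfrac{mM}{c}+1)\mathbb P[\mathcal B]$ will exactly pay for this replacement.

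\textbf{Step 1: the repair.} The two-coordinate hypothesis is precisely what makes (b) attainable. If $\omega\notin\mathcal B$ but its single-coordinate neighbour $\omega'$ (changing coordinate $j$) lies in $\mathcal B$, then for any good configuration $\omega''$ obtained from $\omega'$ by changing one further coordinate $j'\neq j$, the pair $\omega,\omega''$ differs in at most the two coordinates $\{j,j'\}$ and both are good, so $|g(\omega)-g(\omega'')|\le c$. One then \emph{defines} $\tilde g(\omega')$ to be a suitable choice among such values $g(\omega'')$; this forces $\tilde g(\omega')$ to lie within $c$ of $g(\omega)$ for every good single-coordinate neighbour $\omega$ of $\omega'$, and hence within $2c$ of $g$ along any single-coordinate move, which is what (b) needs (the factor $2$ absorbs the ambiguity from bad configurations reachable from several different good ones). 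Configurations $\omega'$ with no good neighbour of this form are ``deep'' in $\mathcal B$: these go into $\mathcal B^\ast$ with an arbitrary value in $[0,M]$, and (b) still holds since every single-coordinate move among them stays bad. Concretely this is the Lipschitz-extension $\tilde g(\omega)=\inf\{g(\omega')+c\lceil \mathrm{ham}(\omega,\omega')/2\rceil : \omega'\in\Omega\setminus\mathcal B\}$, truncated to $[0,M]$ and with $\mathcal B^\ast$ the set where it disagrees with $g$.

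\textbf{Step 2: the error probability, and conclusion.} It remains to bound $\mathbb P[\tilde g(Z)\neq g(Z)]=\mathbb P[Z\in\mathcal B^\ast]$; this is where $mM/c$ enters, via a union bound over the $m$ coordinates and the $O(M/c)$ value scales of size $\le 2c$ along which the repair is organised, each element of $\mathcal B^\ast$ being charged to an element of $\mathcal B$ at most $O(mM/c)$ times, so that $\mathbb P[Z\in\mathcal B^\ast]\le 2\tfrac{mM}{c}\,\mathbb P[\mathcal B]$ (if this already exceeds $1$ the theorem is vacuous). Now apply Azuma--Hoeffding to the Doob martingale of $\tilde g$, whose increments are bounded by $2c$: $\mathbb P[|\tilde g(Z)-\mathbb E\tilde g(Z)|\ge t/2]\le 2e^{-(t/2)^2/(2m(2c)^2)}=2e^{-t^2/(32mc^2)}$. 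Since $|\mathbb E\tilde g(Z)-\mathbb E g(Z)|\le M\,\mathbb P[\tilde g(Z)\neq g(Z)]$ is dominated by the error term, on $\{g(Z)=\tilde g(Z)\}$ the deviation $|g(Z)-\mathbb E g(Z)|$ is controlled by $|\tilde g(Z)-\mathbb E\tilde g(Z)|+|\mathbb E\tilde g(Z)-\mathbb E g(Z)|$, giving
\begin{equation}
\mathbb P\big[|g(Z)-\mathbb E g(Z)|\ge t\big]\le \mathbb P[g(Z)\neq\tilde g(Z)] + \mathbb P\big[|\tilde g(Z)-\mathbb E\tilde g(Z)|\ge t/2\big] \le \big(2\tfrac{mM}{c}+1\big)\mathbb P[\mathcal B] + 2e^{-t^2/(32mc^2)},
\end{equation}
modulo the routine check of the small-$t$ regime (where $M\,\mathbb P[\tilde g\neq g]\le t/2$ may fail, but then $2e^{-t^2/(32mc^2)}\ge 1$ or a direct estimate applies).

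\textbf{Main obstacle.} The crux is Step~1 together with the counting in Step~2: fabricating $\tilde g$ with a clean single-coordinate Lipschitz constant out of a hypothesis that only controls two-coordinate moves among good configurations, and arranging the union bound so that the enlargement $\mathcal B^\ast$ really has probability $O(mM/c)\,\mathbb P[\mathcal B]$ --- i.e.\ so that every ``repair failure'' is genuinely chargeable to some $\omega\in\mathcal B$ and no $\omega\in\mathcal B$ is overcharged by more than this factor. Everything after $\tilde g$ and the bound on $\mathbb P[\mathcal B^\ast]$ are in hand is the textbook Azuma--Hoeffding argument.
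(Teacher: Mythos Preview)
The paper does not prove Theorem~\ref{thm18}; it is quoted verbatim as a tool from \cite[Theorem~3.3]{BKLgeneral} (and independently \cite{combes15}) and used as a black box in the proofs of Lemma~\ref{lEdgeNum} and Theorem~\ref{thm:clustering}. There is therefore no ``paper's own proof'' to compare your proposal against.

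That said, a few remarks on your sketch. The overall architecture --- replace $g$ by a globally $O(c)$-Lipschitz $\tilde g$, apply the classical bounded-differences inequality to $\tilde g$, and pay $O(mM/c)\,\mathbb P[\mathcal B]$ for the replacement --- is indeed the route taken in \cite{BKLgeneral}. Your explanation of why the \emph{two}-coordinate hypothesis is the right one is also on target: when $\omega$ is good but a single-coordinate neighbour $\omega'$ is bad, one anchors $\tilde g(\omega')$ to some good $\omega''$ at Hamming distance $1$ from $\omega'$, and then $\omega,\omega''$ differ in at most two coordinates and are both good.

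Two places are genuinely underspecified. First, your explicit formula $\tilde g(\omega)=\inf\{g(\omega')+c\lceil\mathrm{ham}(\omega,\omega')/2\rceil:\omega'\notin\mathcal B\}$ does not obviously inherit a single-coordinate Lipschitz constant $2c$: the hypothesis gives you nothing about good pairs at Hamming distance $\ge 3$, so the infimum may be realised far away and the one-step variation is uncontrolled. The construction in \cite{BKLgeneral} is more local (essentially your verbal description, not the formula). Second, the bound $\mathbb P[\mathcal B^\ast]\le 2\tfrac{mM}{c}\mathbb P[\mathcal B]$ is asserted via a ``union bound over $m$ coordinates and $O(M/c)$ value scales'', but no mechanism is given that actually charges each point of $\mathcal B^\ast$ to a point of $\mathcal B$; this is where the real work sits, and it does not follow from what you have written. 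If you want a self-contained proof, these two steps need to be made precise; otherwise, citing \cite{BKLgeneral} or \cite{combes15} as the present paper does is the appropriate move.
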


Finally, LeCam's inequality will imply that the degree distribution is approximatively Poisson distributed.
\begin{theorem}[LeCam~\cite{le1960approximation}]\label{thm:lecam}
  Suppose $X_1,\ldots,X_n$ are independent Bernoulli random variables s.t.\
  $\Pr[X_i=1]=p_i$ for $i\in [n]$, $\lambda_n = \sum_{i\in [n]}p_i$ and
  $S_n=\sum_{i\in [n]} X_i$. Then
  \[
    \sum_{k=0}^\infty\left|\Pr[S_n=k] - \frac{\lambda_n^k e^{-\lambda_n}}{k!}\right| < 2\sum_{i=1}^n p_i^2.
  \]
  In particular, if $\lambda_n = \Theta(1)$ and $\max_{i \in [n]}p_i = o(1)$, then $\Pr[S_n = k] = \Theta(1)$ for $k= O(1)$.
\end{theorem}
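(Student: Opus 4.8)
The plan is to prove the bound by the standard coupling argument for Poisson approximation. The key observation is that for $\mathbb{Z}_{\geq 0}$-valued random variables $U,U'$ one has $\sum_{k\geq 0}|\Pr[U=k]-\Pr[U'=k]| = 2\,d_{\mathrm{TV}}(U,U')$, where $d_{\mathrm{TV}}$ denotes total variation distance, and that $d_{\mathrm{TV}}(U,U')$ equals the minimum, over all couplings of $U$ and $U'$, of $\Pr[U\neq U']$ (the maximal coupling attains it in the discrete setting). Hence it suffices to construct, for every $i\in[n]$, a coupling of $X_i$ with a random variable $Y_i\sim\mathrm{Poisson}(p_i)$; taking these couplings mutually independent across $i$ produces a coupling of $S_n$ with $T_n:=\sum_{i=1}^n Y_i$, and $T_n\sim\mathrm{Poisson}(\lambda_n)$ since a sum of independent Poisson variables is again Poisson with the summed parameter.

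First I would record the single-coordinate estimate. Using an optimal coupling of each pair and a union bound, $\Pr[S_n\neq T_n]\leq\sum_{i=1}^n\Pr[X_i\neq Y_i]=\sum_{i=1}^n d_{\mathrm{TV}}(\mathrm{Bernoulli}(p_i),\mathrm{Poisson}(p_i))$. A short computation — splitting the sum defining the total variation distance according to whether the common value is $0$, $1$, or at least $2$ — gives the closed form $d_{\mathrm{TV}}(\mathrm{Bernoulli}(p),\mathrm{Poisson}(p))=p(1-e^{-p})$, and since $1-e^{-p}<p$ for every $p>0$ this is strictly smaller than $p^2$. Combining with the identity from the first paragraph, $\sum_{k\geq 0}\bigl|\Pr[S_n=k]-\lambda_n^k e^{-\lambda_n}/k!\bigr| = 2\,d_{\mathrm{TV}}(S_n,T_n)\leq 2\sum_{i=1}^n d_{\mathrm{TV}}(\mathrm{Bernoulli}(p_i),\mathrm{Poisson}(p_i)) = 2\sum_{i=1}^n p_i(1-e^{-p_i}) < 2\sum_{i=1}^n p_i^2$, which is the claimed inequality.

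For the ``in particular'' clause I would note that under the hypotheses $\sum_{i=1}^n p_i^2\leq(\max_{i\in[n]}p_i)\cdot\sum_{i=1}^n p_i = o(1)\cdot\lambda_n = o(1)$, so $\Pr[S_n=k]=\lambda_n^k e^{-\lambda_n}/k!+o(1)$ uniformly in $k$. Since $\lambda_n=\Theta(1)$ is bounded away from $0$ and from $\infty$, for each fixed $k=O(1)$ the Poisson weight $\lambda_n^k e^{-\lambda_n}/k!$ lies in a fixed interval bounded away from $0$ and $\infty$; adding an $o(1)$ error therefore still leaves $\Pr[S_n=k]=\Theta(1)$ for all sufficiently large $n$.

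The result is classical and presents no genuine obstacle; the only points requiring a little care are verifying the exact value $p(1-e^{-p})$ of the per-coordinate total variation distance (a cruder bound would still suffice for the stated inequality, but this computation is what makes the constant clean) and observing that the strict inequality ``$<$'' tacitly assumes at least one $p_i>0$, the all-zero case being trivial. An alternative proof via probability generating functions or via Stein's method is possible but less transparent than the coupling approach.
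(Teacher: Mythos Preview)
Your proof is correct and is the standard coupling argument for LeCam's inequality. However, there is nothing to compare it against: the paper does not prove this theorem at all. It is stated as a tool with a citation to LeCam's original paper~\cite{le1960approximation} and then used as a black box in the appendix (Theorem~\ref{thm:clustering}). So your write-up goes beyond what the paper supplies; the paper treats the result as classical and imports it without argument.
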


For convenience, we also restate here Lemma~\ref{LMD}, which bounds the number of small cuts in connected graphs. 
\begin{replemma}{LMD}[{Cut bound~\cite[Lemma 7]{LMD}}]
For any $\varepsilon > 0$ there exist $\eta_0(\varepsilon) > 0$ and $n_0$ such 
that the following holds. For all $n \geq n_0$, and for all connected graphs 
$G$ with $n$ vertices, there are at most $(1+\varepsilon)^n$ bipartitions of 
$G$ with at most $\eta_0 n$ cross-edges.
\end{replemma}

\subsection{Component-Wise Sampling}
\label{secLB}
Naively, we would generate a graph from the GIRG model by drawing all positions (which determines all $p_{uv}$ for $u \neq v \in V$), and then inserting the edge $\{u,v\}$ with probability $p_{uv}$. The latter can be done by drawing independent random variables $Y_{uv}$ uniformly at random from $[0,1]$, and inserting the edge $\{u,v\}$ if and only if $Y_{uv} < p_{uv}$. Since the random $Y_{uv}$, $x_u$, $x_v$ are independent, we may draw them in any order. In particular, we may draw $Y_{uv}$ \emph{before} drawing $x_u$ and $x_v$. This point of view will be helpful in several places. In particular, it will allow us to insert some edges even if only the first $d-1$ component of $x_u$ and $x_v$ have been drawn. More precisely, we proceed as follows.


Inequality \eqref{EPL} is a lower bound for $p_{uv}$; this means that
\begin{equation}
Y_{uv} < c_L \min \left\{1,\left( \frac{w_uw_v}{n \cdot V(\dist{x_u - x_v})}\right)^\alpha \right\}
\end{equation}
implies $Y_{uv} < p_{uv}$. Therefore, it is a sufficient condition for
edge insertion, i.e. the edge between $u$ and $v$ is inserted with probability
one if it is fulfilled. We are particularly interested in applying this to the
MCD model, where we have defined $V_\text{min}(r) = r$.
For the MCD model, the sufficient condition is thus given as
\begin{equation}
\label{lowerB}
\begin{split}
Y_{uv} & < c_L \min \left\{1, \left( \frac{w_uw_v}{n \cdot \dist{x_u - x_v}_\text{min}}\right)^\alpha \right\} =: p_{uv}^L(\dist{x_u - x_v}_\text{min}).
\end{split}
\end{equation}
Note that if 
$\dist{x_u-x_v}_\text{min} \leq \frac{w_uw_v}{W}$, the $\min \{\ldots  \}$ term
becomes 1. In that case, having $Y_{uv} < c_L$ guarantees that the edge between
$u$ and $u$ is inserted, i.e.
\begin{equation}
\mathbb{P}\left[ u\sim v \mid \dist{x_u - x_v}_\text{min} \leq \frac{w_uw_v}{n}\right]
\geq \min\{c_L, 1\}.
\end{equation}
Note that $w_uw_v \geq w_\text{min}^2 = \Omega(1)$ due to \eqref{PL1}. This
enables us to give a lower bound on connection probability that depends on
$\dist{x_u - x_v}_\text{min}$ but not on the weights:
\begin{equation}
\label{closeB}
\mathbb{P}\left[ u\sim v \mid \dist{x_u - x_v}_\text{min} \leq \frac{w_\text{min}^2}{n}\right]
\geq \min\{c_L, 1\}.
\end{equation}
This statement (with a slight modification) will be quite handy for proving a
key lemma, namely Lemma \ref{l1}.

Consider the lower bound \eqref{lowerB}. The condition is met if the following inequality is true:
\begin{equation}
\label{lowerB2}
\min_{1\leq i \leq d} |x_{ui}-x_{vi}|_T = \dist{x_u - x_v}_\text{min}
< c_L^{1/\alpha}\frac{1}{(Y_{uv})^{1/\alpha}}\cdot\left( \frac{w_uw_v}{n}\right).
\end{equation}
By the definition of MCD, it is satisfied if and only if for at least one
coordinate $1\leq i \leq d$, the absolute difference $|x_{ui}-x_{vi}|_T$ is
smaller than the right hand side. The coordinate values are i.i.d.\! from $[0,1]$; 
conditioned on $Y_{uv}$, we can consider drawing the positions as $d$ independent 
``attempts'' to fulfil  \eqref{lowerB2}. Under that view, a $d$-dimensional
MCD graph is lower-bounded by a union of $d$ one-dimensional EuGIRG instances
which all use the same $Y_{uv}$ and are thus not independent. 
However, we can make them independent by the ``splitting'' each $Y_{uv}$ into two independent random
variables in the following way.
Let $Y_{uv}^1$ and $Y_{uv}^2$  be i.i.d. from $[0,1]$, with the following CDF
for $c \in [0,1]$:
\begin{equation}
\label{CDF}
\mathbb{P} \left[Y_{uv}^1 < c\right] = \mathbb{P} \left[Y_{uv}^2 < c\right] = 1 -\sqrt{1-c}.
\end{equation}
A quick calculation shows that the following two statements hold for each $c \in [0,1]$:
\begin{align}
\label{pBound}
&c/2 \leq \mathbb{P} \left[Y_{uv}^1 < c\right] = \mathbb{P} \left[Y_{uv}^2 < c\right] \leq c &\text{and}\\
&\mathbb{P}\left[\min\{Y_{uv}^1, Y_{uv}^2\} < c \right] = \mathbb{P}\left[Y_{uv} < c\right] = c.&
\end{align}
Because of the latter, instead of checking $Y_{uv} < p_{uv}(x_u, x_v)$ for
including edges, we can equivalently use
\begin{equation} 
\label{EPs}
\tag{EIC}
\min\{Y_{uv}^1, Y_{uv}^2\} < p_{uv}(x_u, x_v)
\end{equation}
as \textbf{E}dge \textbf{I}nsertion \textbf{C}riterion.
The lower bound \eqref{lowerB} becomes
\begin{equation}
\label{lowerB3}
\min\{Y_{uv}^1, Y_{uv}^2\} < p_{uv}^L(\dist{x_u - x_v}_\text{min}).
\end{equation}
Since we have
\begin{equation}
\dist{x_u - x_v}_\text{min} =  \min\{ \min_{1\leq i < d}\{|x_{ui}-x_{vi}|_T\}, |x_{ud}-x_{vd}|_T \},
\end{equation}
Inequality \eqref{lowerB3} is satisfied if  
\begin{align}
Y_{uv}^1 &< p_{uv}^L(\min_{1\leq i < d}\{|x_{ui}-x_{vi}|_T\}) &\text{ or} \label{LB1} \tag{LB1}\\
Y_{uv}^2 &< p_{uv}^L(|x_{ud}-x_{vd}|_T).                                     \label{LB2} \tag{LB2}
\end{align}
Note that \eqref{LB1} and \eqref{LB2} are independent of each other; furthermore,
each of the two implies~\eqref{lowerB3} and thus \ref{EPs}, giving a
lower bound. Using \eqref{LB1}
as the edge insertion criterion satisfies  \eqref{EP} on
$\mathbb{T}^{d-1}$; this enables us to apply the theorems from the
previous section on the resulting graphs. For fixed $x_{u}$ and $x_{v}$, the
probability of satisfying $Y_{uv}^1 < p_{uv}^L$ is bounded by
\begin{equation}
\label{irregB}
p_{uv}^L/2 \leq \mathbb{P}\left[ Y^1_{uv} <  p_{uv}^L \right]
\leq p_{uv}^L
\end{equation}
according to  \eqref{pBound}. Thus, given that $p_{uv}^L$ satisfies \eqref{EP},
so does \eqref{LB1}.

Note that for $d = 1$, Inequality \eqref{LB1} is not defined.
This is what sets the case $d=1$ (which does have small separators) apart from $d>1$
(which does not). It does not seem to be possible to lower-bound a  one-dimensional MCD graph
with two completely independent random graphs that fulfil \eqref{EP}, which is
crucial for the proof.

\subsection{Uncovering Procedure}
\label{secProc}


Let us describe the procedure we use for uncovering the edges of an MCD-GIRG, as outlined in Section~\ref{secComp}. A condensed version can be found in Algorithm \ref{SampleAlg}.
\paragraph*{Phase 1}
We start by drawing $Y_{uv}^1$ and $x_{ui}$ for all $u, v \in V$
and all $1 \leq i \leq d-1$, i.e. leaving out all $Y_{uv}^2$ and $x_{ud}$. This is
already sufficient to determine the graph induced by  \eqref{LB1}, which
we call $G_1$.
By Theorem \ref{compSize}, there is a constant $s_\text{max}$ such
that w.h.p.\! the largest component (the giant) of $G_1$ has at least $s_\text{max} n$
vertices; we will always assume that this holds.
We denote the giant of $G_1$ by $K_\text{max}^1$.
\paragraph*{Phase 2}
According to \eqref{PL2}, there is a constant $B^\prime$ such that w.h.p.
at least $s_\text{max}n/2$ vertices of $K_\text{max}^1$ have weight less than
$B^\prime$. This can be shown using the power-law requirements: In \eqref{PL2},
set $\eta = 1$ and $B^\prime > (2c_2/s_\text{max})^{1/(\beta-2)}$. Then, at most
$s_\text{max}n/2$ vertices have weight at least $B^\prime$. Even if all of them are
in $K_\text{max}^1$, since we assume  $|K_\text{max}^1| \geq s_\text{max}n$,
at least $s_\text{max}n/2$ vertices of $K_\text{max}^1$ must
have weight smaller than $B^\prime$. (As mentioned in the previous phase, that
assumption holds with high probability.)
We now draw a set of vertices $F^\prime$ by including each
vertex (not just those in the giant) with weight less than $B^\prime$ independently
with probability $4f/s_\text{max}$, where $0 < f < \min\{\delta/12, s_\text{max}/12\}$
is a constant to be determined later.
\paragraph*{Phase 3}
We set $F := F^\prime \cap K_\text{max}^1$. The expected size of $F$ is at least
$4f/s_\text{max} \cdot s_\text{max}n/2 = 2fn$, and at most $4f/s_\text{max}
\cdot s_\text{max}n = 4 fn$. Since vertices of $K_\text{max}^1$ with weight less
than $B^\prime$ are included independently in $F^\prime$ (and thus $F$), we
can use the Chernoff bounds (Theorem \ref{wchb} with $\varepsilon = 1/2$)
to show that w.h.p.\! we have $6fn \geq |F| \geq fn$; in our further considerations,
we will assume this to be true. Note that the order of phases 1 and 2 is
interchangeable, as they are independent (the choice of $B^\prime$ depends only
on $s_\text{max}$ and not on the outcome of $G_1$).\medskip

The final three phases consist of drawing $x_{ud}$ and $Y_{uv}^2$ for all vertices
in a particular order that depends on the outcome of phases 1--3. We enumerate
the vertices from $1$ to $n$; the $k$-th vertex is denoted by $u_k$.
The enumeration has to be such that the vertices of $V \backslash K_\text{max}^1$
come first, then the vertices of $K_\text{max}^1 \backslash F$, and finally the
vertices of $F$. More precisely, we have $u_i \in V \backslash K_\text{max}^1$
for all $1 \leq i \leq |V \backslash K_\text{max}^1|$;
for all $|V \backslash K_\text{max}^1| < i \leq |V\backslash F|$ we have $u_i \in
K_\text{max}^1 \backslash F$, and finally, for all $|V\backslash F| < i \leq n$
we have $u_i \in F$.
We draw the
$d$-th coordinates step-by-step in the order given by the enumeration. The 
$d$-th coordinate of $u_k$ is written as $x_{kd}$, since the more consistent
$x_{u_kd}$ might be somewhat hard to read. Together with $x_{kd}$, we draw $Y_{kj}^2$
for all $1 \leq j < k$ (where $Y_{kj}^2$ is short for $Y_{u_ku_j}^2$). Then, for all
vertex pairs $(u_i, u_j)$ with $i, j \leq k$, we have complete information; we
add all edges fulfilling  \eqref{EPs} between such pairs. 
Therefore, the $k$-th step can be described as
\begin{itemize}
\item Draw $x_{kd}$ independently and uniformly from $X_d := [0,1]$.
\item For all $1 \leq j < k$, draw $Y_{kj}^2$ independently according to \eqref{CDF}.
\item For all $1 \leq j < k$, add the edge between $u_k$ and $u_j$ if 
\eqref{EPs} is satisfied.
\end{itemize}
The definition of the final three phases reflects the way we chose to enumerate
the vertices:
 
\paragraph*{Phase 4}
This phase consists of steps $1$ to $|V\backslash K_\text{max}^1|$, i.e. it encompasses
the following actions:
Draw $x_{ud}$ and $Y_{uv}^2$ for all $u \not = v \in V \backslash K_\text{max}^1$,
Then, for all pairs $(u,v) \in (V \backslash K_\text{max}^1 \times V \backslash K_\text{max}^1)$,
add the edge between $u$ and $v$ if  \eqref{EPs} is satisfied.
This results in a graph we call $G_2$.
\paragraph*{Phase 5}
This phase consists of steps $|V\backslash K_\text{max}^1|+1$ to $|V\backslash F|$, i.e. it encompasses
the following actions:
Draw $x_{ud}$ and $Y_{uv}^2$ for all $u \not = v \in K_\text{max}^1 \backslash F$.
Then, for all pairs $(u,v) \in (V \backslash F \times V \backslash F)$,
add the edge between $u$ and $v$ if  \eqref{EPs} is satisfied.
This results in a graph we call $G_3$.
\paragraph*{Phase 6}
This phase consists of steps $|V\backslash F|+1$ to $n$, i.e. it encompasses
the following actions:
Draw $x_{ud}$ and $Y_{uv}^2$ for all $u \not = v \in K_\text{max}^1 \backslash F$.
Then, for all pairs $(u,v) \in (V \times V)$,
add the edge between $u$ and $v$ if  \eqref{EPs} is satisfied.
This results in a graph we call $G_4$, i.e. the ``actual'' random graph from the GIRG model.

\begin{algorithm}
  \caption{Sample a MCD model graph}
  \label{SampleAlg}
\begin{algorithmic}
  \Procedure{SampleMCD}{$d, \alpha, \beta, n, \mathbf{w}, \delta$}
    \State
    \State \textit{// Phase 1}
    \State Draw $x_{ud}, Y_{uv}^1$ for all $u, v \in V$.
    \State Add edges between pairs satisfying \eqref{LB1} $\rightarrow G_1$
    \State $K_\text{max}^1 \gets $ giant of $G_1$ ($|K_\text{max}^1| \geq s_\text{max}n$ w.h.p)
    \State
    \State \textit{// Phase 2}
    \State Choose $f < \min\{\delta/12, s_\text{max}/12\}$
    \State $F^\prime \gets 0$
    \ForAll{$u \in V$ with $w_u < B^\prime$}
        \State Add $u$ to $F^\prime$ with probability $4f/s_\text{max}$
    \EndFor
    \State
    \State \textit{// Phase 3}
    \State $F \gets F^\prime \cap K_\text{max}^1$
    \State Enumerate vertices from $1$ to $n$ such that
        \State \hspace{\algorithmicindent} $\{u_1, \ldots, u_{|V \backslash K_\text{max}^1|} \} = V \backslash K_\text{max}^1$,
        \State \hspace{\algorithmicindent} $\{u_{|V \backslash K_\text{max}^1|+1}, \ldots, u_{|V \backslash F|} \} = K_\text{max}^1 \backslash F$,
        \State \hspace{\algorithmicindent} $\{u_{|V \backslash F|+1}, \ldots, u_n \} = F$
    \State (Shorthand $x_{kd}$ and $Y_{kj}^2$ for $x_{u_kd}$ and $Y_{u_ku_j}^2$, respectively)
    \State
    \State \textit{// Phase 4 (results in $G_2$)}
    \ForAll{$1 \leq k \leq |V\backslash K_\text{max}^1|$}
        \State \Call{Step}{k}
    \EndFor
    \State
    \State \textit{// Phase 5 (results in $G_3$)}
    \ForAll{$|V\backslash K_\text{max}^1| < k \leq |V\backslash F|$}
        \State \Call{Step}{k}
    \EndFor
    \State
    \State \textit{// Phase 6 (results in $G_4$)}
    \ForAll{$|V\backslash F| < k \leq n$}
        \State \Call{Step}{k}
    \EndFor

  \EndProcedure
  \State
  \Procedure{Step}{$k$}
        \State Draw $x_{kd}$ uniformly from $[0,1]$.
        \ForAll{$1 \leq j < k$}
            \State Draw $Y_{kj}^2$ independently according to the CDF given in  \eqref{CDF}.
            \State Add edge between $u_k$ and $u_j$ if  \eqref{EPs} is satisfied.
        \EndFor
    \EndProcedure
\end{algorithmic}
\end{algorithm}

Our goal is to show that the sixth and last phase destroys all ``small''
cuts in the giant of $G_3$ (Lemma \ref{lCut}) while adding few vertices to it
(Lemma \ref{l4}).
For $1 \leq i \leq 4$, we let $K_\text{max}^i$ denote the connected component
that contains $K_\text{max}^1$. 
For proving that ``small'' cuts are destroyed, we will lower-bound the probability
that an edge is present using
\begin{equation}
\tag{\ref{LB2}}
Y_{uv}^2 < p_{uv}^L(|x_{ud}-x_{vd}|_T).
\end{equation}


\section{Proof of Theorem~\ref{mainThm}}
\label{proofSec}
With all these tools laid out, we are ready to tackle the ``core'' of the proof. We
start by stating the following crucial lemma.
\begin{lemma}
\label{lCut}
There is a constant $\eta > 0$ such that with high probability, $K^3_\text{max}$
has no $(\delta, \eta)$-cut in $G_4$. (More precisely, the subgraph of $G_4$
induced by $K^3_\text{max}$ has no $(\delta, \eta)$-cut.)
\end{lemma}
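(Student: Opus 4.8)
The strategy is the GIRG analogue of the {\L}uczak--McDiarmid argument, using the component-wise splitting of edges. Recall that $G_3$ is obtained after Phase~5 and still has a giant $K^3_\text{max} \supseteq K^1_\text{max}$, and that Phase~6 adds (among possibly others) all edges incident to $F$ that satisfy \eqref{LB2}, i.e.\ edges $\{u,v\}$ with $u \in F$, $v \in V\setminus F$, such that $Y^2_{uv} < p^L_{uv}(|x_{ud}-x_{vd}|_T)$. I would fix $\varepsilon>0$ small (to be chosen at the end), let $\eta_0 = \eta_0(\varepsilon)$ be the constant from Lemma~\ref{LMD}, and set $\eta := \eta_0/2$ (or similar). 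By Lemma~\ref{LMD} applied to the connected graph induced by $K^3_\text{max}$ (which has at least $s_\text{max}n \geq \delta n$ vertices w.h.p.), there are at most $(1+\varepsilon)^{n}$ bipartitions of $K^3_\text{max}$ with at most $\eta_0 n$ cross-edges inside $G_3$. It suffices to union-bound over these: any $(\delta,\eta)$-cut of $K^3_\text{max}$ in $G_4$ that is \emph{not} in this list already has more than $\eta_0 n > \eta n$ cross-edges in $G_3 \subseteq G_4$, contradiction; so we only need that w.h.p.\ every cut in the list receives $\geq \eta n$ additional cross-edges from the Phase-6 \eqref{LB2}-edges incident to $F$.

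Fix one such bipartition $(A,B)$ of $K^3_\text{max}$, with $|A| \geq \delta n/2$ and $|B| \geq \delta n/2$ (after discarding the trivial case; both sides have linear size since the cut is a $(\delta,\eta)$-cut or we can afford to only count cuts with both sides $\geq \delta n/2$ — here I would be careful to track which $\delta$ is meant and adjust constants). The set $F$ has $|F| \geq fn$ w.h.p.\ and consists of bounded-weight ($w_u < B'$) vertices of $K^1_\text{max} \subseteq K^3_\text{max}$. Each $u \in F$ lies in $A$ or in $B$; say at least $|F|/2 \geq fn/2$ of them lie on one side, w.l.o.g.\ in $A$ (there is a subtlety: the side membership of $F$-vertices is determined before Phase~6, so we can condition on it). For each such $u \in F \cap A$, I want a lower bound on the probability (over the fresh randomness $x_{ud}$ and the $Y^2_{uv}$, $v\in V\setminus F$) that $u$ acquires an \eqref{LB2}-edge into $B$. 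Since the vertices $v \in B \setminus F$ already had their $d$-th coordinates $x_{vd}$ drawn in Phases~4--5, the edge probabilities are correlated through these fixed coordinates; this is exactly the point where Lemma~\ref{l1} enters, guaranteeing that the $x_{vd}$ for $v \in V\setminus F$ are spread uniformly enough over $[0,1]$ that, using \eqref{closeB}-type bounds, $\Pr[u \text{ connects to some } v\in B \text{ via }\eqref{LB2}] \geq q$ for some constant $q = q(c_L, f, \delta) > 0$, uniformly over the conditioning. Then the number of \eqref{LB2}-edges from $F\cap A$ to $B$ stochastically dominates (or can be lower-bounded via a Chernoff argument on near-independent indicators — one indicator per $u \in F\cap A$, using that distinct $u$'s use disjoint batches of $Y^2_{uv}$) a $\mathrm{Bin}(fn/2, q)$, which is $\geq \eta n$ with probability $1 - e^{-\Omega(n)}$ once $\eta < fq/4$, say.

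Finally, choose $\varepsilon>0$ so small that $(1+\varepsilon)^n \cdot e^{-\Omega(n)} \to 0$, take a union bound over the $\leq (1+\varepsilon)^n$ listed cuts, and conclude that w.h.p.\ none survives; together with the observation about unlisted cuts, w.h.p.\ $K^3_\text{max}$ has no $(\delta,\eta)$-cut in $G_4$. The main obstacle is the middle step: establishing the uniform constant lower bound $q$ on the per-vertex connection probability into a prescribed linear-size set $B$, \emph{despite} the $d$-th coordinates of $V\setminus F$ being already exposed and hence the edge events being dependent. This is where one must invoke (a suitable version of) Lemma~\ref{l1} to argue that the exposed coordinates are ``generic'' — spread out enough that a constant fraction of the interval $[0,1]$ around a freshly drawn $x_{ud}$ contains enough $B$-vertices at MCD-distance $\leq w_\text{min}^2/n$ (or $\leq w_u w_v/n$) to force an edge with constant probability — and then to handle the residual dependence, e.g.\ by exposing the $F$-vertices one at a time and applying Chernoff/Azuma to the resulting supermartingale. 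Care is also needed that "at most $\eta_0 n$ cross-edges" in $G_3$ is compatible with Lemma~\ref{LMD}'s hypothesis (connectedness of $K^3_\text{max}$), and that the constants $f, \delta, \eta, \varepsilon$ are chosen in the right dependency order.
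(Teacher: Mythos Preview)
Your proposal is correct and follows essentially the same route as the paper: apply Lemma~\ref{LMD} to $K^3_\text{max}$ to bound the number of sparse cuts in $G_3$, then for each such cut use the fresh Phase~6 randomness of the $F$-vertices (via Lemma~\ref{l1}/Corollary~\ref{cor1}) to show that a constant fraction of $F\cap A$ receive an \eqref{LB2}-edge into $B\setminus F$, with the per-vertex indicators independent after conditioning on the already-exposed $d$-th coordinates of $V\setminus F$; finally union-bound. The only points you flag as needing care---that $|B\setminus F|$ stays linear (the paper uses $f<\delta/12$ and $|F|\le 6fn$ to get $|C_2\setminus F|\ge \delta n/2$) and the dependency order of the constants $\varepsilon,\eta,f,P$---are exactly the bookkeeping the paper carries out explicitly, so there is no genuine gap.
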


In order to prove this, we need the following auxiliary Lemma \ref{l1}. 
\begin{lemma}
\label{l1}
Consider the set of $d$-th vertex coordinates $\{x_{ud} \mid u \in V\}$. It is
a set of $n$ random variables drawn uniformly and independently from $X_d = [0,1]$.
We divide $X_d$ into $M := \ceil{n/l}$ \emph{subintervals} of equal length,
where $0< l < 1$ is a constant. For all $0 \leq j < M$, we define the $j$-th subinterval
as $I_j := [j/M, (j+1)/M]$. Then, for every $0 < \delta < 1$, and every $l$, there is a constant
$r(\delta, l) > 0$ having the following property:
With probability $1-e^{-\Theta(n)}$, there is no set $S$ of $rn$ subintervals such 
that there are at least $\delta n/2$ vertices $u$ with $x_{ud} \in S$.
\end{lemma}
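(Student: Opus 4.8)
The plan is a first-moment argument: a union bound over all candidate families $S$ of subintervals, with a Chernoff bound (Lemma~\ref{schb}) for each fixed $S$. The guiding picture is balls into bins: the $M = \lceil n/l\rceil$ subintervals each have length at most $l/n$, so the $n$ i.i.d.\ uniform coordinates $x_{ud}$ put an expected $\Theta(l)$ of them into each subinterval, and a fixed set $S$ of $\lceil rn\rceil$ subintervals has total Lebesgue measure at most $2rl$ (for $n$ large), hence captures an expected at most $2rln$ vertices. Since the target $\delta n/2$ is a \emph{fixed} fraction of $n$ whereas $2rln$ is an \emph{arbitrarily small} fraction, each fixed $S$ is very unlikely to capture $\delta n/2$ vertices; and there are few candidates because $r$ is small. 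Concretely I would fix $\delta$ and $l$, keep $r = r(\delta,l) > 0$ as a small constant to be pinned down at the end, and take $r < \delta/(8l)$, so that $2rl < \delta/4$.

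For a fixed set $S$ of $s := \lceil rn\rceil$ subintervals, the variable $X_S := \#\{u \in V : x_{ud} \in \bigcup_{j \in S} I_j\}$ is a sum of $n$ i.i.d.\ $\{0,1\}$-valued random variables with $\Ex[X_S] = ns/M \le 2rln$. Choosing $\varepsilon$ so that $(1+\varepsilon)\Ex[X_S] = \delta n/2$ (hence $1+\varepsilon \ge \delta/(4rl) > 1$) and invoking Lemma~\ref{schb} gives
\[
\Pr\!\big[X_S \ge \tfrac{\delta n}{2}\big] \;\le\; \Big(\tfrac{e}{1+\varepsilon}\Big)^{(1+\varepsilon)\Ex[X_S]} \;\le\; \Big(\tfrac{4erl}{\delta}\Big)^{\delta n/2}.
\]
A union bound over the at most $\binom{M}{s} \le (2e/(rl))^{2rn}$ choices of $S$ (valid for $n$ large) then yields
\[
\Pr\!\big[\exists\, S,\ |S| = s,\ X_S \ge \tfrac{\delta n}{2}\big] \;\le\; \exp\!\Big(n\Big[\,2r\ln\tfrac{2e}{rl} + \tfrac{\delta}{2}\ln\tfrac{4erl}{\delta}\,\Big]\Big).
\]

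As $r \to 0^+$, the first bracketed term $2r\ln(2e/(rl))$ tends to $0$ (because $r\ln(1/r)\to 0$), while the second $\tfrac{\delta}{2}\ln(4erl/\delta)$ tends to $-\infty$; hence for $r = r(\delta,l)$ chosen small enough the bracket is a strictly negative constant, so the probability above is $e^{-\Theta(n)}$. It remains only to observe the trivial monotonicity that if no $S$ of size exactly $s$ captures $\delta n/2$ vertices then neither does any smaller $S$, and that the ceilings $\lceil rn\rceil$, $\lceil n/l\rceil$ only affect constants. I do not expect a real obstacle here; the one point requiring a moment's care is the very last step, namely that the union-bound overhead, of order $r\ln(1/r)$ per vertex, is negligible against the Chernoff decay rate, of order $\delta\ln(1/r)$ per vertex --- which holds precisely because $\delta$ is a fixed constant while $r/\delta \to 0$. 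Everything else is routine.
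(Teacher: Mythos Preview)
Your proposal is correct and follows essentially the same approach as the paper: fix a candidate set $S$, apply the strong Chernoff bound (Lemma~\ref{schb}) to show $X_S \ge \delta n/2$ has probability exponentially small in $n$, then union-bound over all candidates and choose $r$ small enough that the Chernoff decay dominates. The only cosmetic difference is that the paper takes the cruder union bound $2^M$ over \emph{all} subsets of subintervals (and accordingly pushes the Chernoff base below a fixed threshold), whereas you use the sharper count $\binom{M}{s}\le (2e/(rl))^{2rn}$ and the asymptotic observation $r\ln(1/r)\to 0$ versus $\tfrac{\delta}{2}\ln(1/r)\to -\infty$; both routes lead to the same $e^{-\Theta(n)}$ conclusion.
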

\begin{proof}
Fix a set $S$ of $rn$ subintervals. The sum of the lengths of all subintervals 
in $S$ is $rn \cdot \frac{1}{M} =rn / \ceil{ n/l}$. Therefore, for every
vertex $u$, the probability of $u$ getting placed in $S$ (i.e. having $x_{ud} \in S$)
is at least $rl/2$, at most $2rl$
(by virtue of $rl/2 \leq rn / \ceil{ n/l} \leq 2rl$), and for $\mu$, the expected
number of vertices placed in $S$, we have $ 2rln \geq \mu \geq rln/2$. 

Since the $x_{ud}$ are drawn independently, we can use Chernoff's inequality to bound the 
probability that at least $\delta n/2$ vertices are placed in $S$.
More precisely, we use the ``strong'' Chernoff bound (Theorem \ref{schb}). 
Note that in our case $\delta/2 \cdot n \geq \delta/2 \cdot \frac{1}{2rl} 
\mu = \frac{\delta}{4rl}\mu$, so
we set $(1+\epsilon) = \frac{\delta}{4rl}$ in Theorem \ref{schb}. We obtain
\begin{equation}
\mathbb{P} \left[ \#\{u \mid x_{ud} \in S\} \geq 
\frac{\delta}{2}n \right]
\leq \left( \frac{4erl}{\delta} \right)^{\frac{\delta}{4rl}\cdot rln/2}
=  \left[ \left( \frac{4erl}{\delta} \right)^{l \delta/16} \right]^{2n/l}
\end{equation}
If we choose $r$ small enough such that the term in square brackets is smaller 
than $1/2$, this upper bound is at most $e^{-(\log 2) 2 n/l}$.

As a final step, we take the union bound over all possible choices of $S$. 
There are at most $2^M \leq 2^{2n/l}$ ways to choose $S$, so the probability 
that for at least 
one such $S$ there are at least $\delta n/2$ vertices in $S$ is upper-bounded by
$e^{(1-\log 2 ) 2n/l} = e^{-\Theta(n)}$.
\end{proof}

Note that restricting the set of vertices considered for Lemma \ref{l1} just makes
the statement weaker; the lemma implies that for any $V^\prime \subseteq V$, w.h.p.
there is no set $S$ of $rn$ subintervals such that there are at least $\delta n/2$
vertices $u \in V^\prime$ with $x_{ud} \in S$. We will make use of this for the 
following corollary that will provide a lower bound for the edge probabilities in the proofs
of Lemmas \ref{lCut} and \ref{l4}.

\begin{corollary}
\label{cor1}
There is a constant $P >0$ such that with high probability, the following holds
for each step $k$ of Algorithm \ref{SampleAlg}, where $\delta n/2 <  k \leq n$:\\
Let $V_k := \{v_i \in V \mid 1\leq i < k\}$. For each subset  $A \subseteq V_k$
of size at least $\delta n/2$, the probability that step $k$ 
produces an edge from $u_k$ to $A$ due to \eqref{LB2} is at least $P$, i.e.
\begin{equation}
\mathbb{P}\left[\exists v \in A \text{ s.t. } u_k \sim v\right] \geq P.
\end{equation} 
\end{corollary}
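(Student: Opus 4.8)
The plan is to apply Lemma~\ref{l1} together with the weight-free lower bound~\eqref{closeB} (more precisely, its analogue along the $d$-th coordinate coming from~\eqref{LB2}). First I would fix the constant $l$ in Lemma~\ref{l1} to be, say, $l := w_{\min}^2$ (or any constant; the precise value only affects constants), so that each subinterval $I_j \subseteq X_d$ has length $1/M \leq l/n = w_{\min}^2/n$. Then for any two vertices $u,v$ whose $d$-th coordinates lie in the same subinterval we have $|x_{ud}-x_{vd}|_T \leq w_{\min}^2/n \leq w_uw_v/n$, so the $\min\{\ldots\}$ term in $p_{uv}^L$ equals~$1$, and hence~\eqref{LB2} is satisfied as soon as $Y_{uv}^2 < c_L$. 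By~\eqref{pBound} this happens with probability at least $\min\{c_L,1\}/2 =: q > 0$, a constant independent of the weights and of $n$.

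Next I would invoke Lemma~\ref{l1} (in the strengthened form noted in the remark after its proof, applied to $V' = V_k$): with probability $1 - e^{-\Theta(n)}$ there is no set $S$ of $rn$ subintervals containing the $d$-th coordinates of at least $\delta n/2$ vertices of $V_k$. Taking a union bound over the at most $n$ relevant steps $k$, we may assume this event holds simultaneously for all $k$, still w.h.p. Now fix such a step $k$ and a subset $A \subseteq V_k$ with $|A| \geq \delta n/2$. Let $S_A$ be the set of subintervals that contain the $d$-th coordinate of at least one vertex of $A$; by the Lemma (contrapositive) we must have $|S_A| > rn$, i.e.\ the vertices of $A$ occupy more than $rn$ distinct subintervals. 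Pick one representative of $A$ in each of $rn$ of these subintervals; call this set $A' \subseteq A$ with $|A'| = rn$, all in distinct subintervals.

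The point of choosing distinct subintervals is that the event ``$x_{kd} \in I_j$ and $Y^2_{ku}<c_L$ for the representative $u$ of $I_j$'' for different $j$ are disjoint in the $x_{kd}$-coordinate, so a mild computation gives a clean lower bound: conditioning on the (independent) draw of $x_{kd}$, the probability that $x_{kd}$ falls into some subinterval hit by $A'$ is at least $rn \cdot \frac{1}{2M} \geq rl/2$, and given that, the edge to the corresponding representative is inserted with probability at least $q$ by the above, using that the $Y^2_{ku}$ are drawn fresh and independently in step $k$. Hence
\begin{equation}
\mathbb{P}\left[\exists v \in A \text{ s.t.\ } u_k \sim v\right] \geq \mathbb{P}\left[\exists v \in A' \text{ s.t.\ } u_k \sim v \text{ via }\eqref{LB2}\right] \geq \frac{rl}{2}\cdot q =: P > 0,
\end{equation}
a constant depending only on $\delta$ (through $r = r(\delta,l)$) and the model parameters, as required.

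The main obstacle is not any single estimate but making sure the quantification is handled correctly: the bad set in Lemma~\ref{l1} must be beaten \emph{uniformly} over all steps $k$ and all large subsets $A$, and the lower bound on the edge probability must hold \emph{conditioned on all information revealed before step $k$} (the positions $x_{jd}$ for $j<k$, hence the location of $A$ in the subintervals) while only using the freshly drawn $x_{kd}$ and $Y^2_{kj}$. The union bound over $k \leq n$ against the $e^{-\Theta(n)}$ failure probability of Lemma~\ref{l1} is what makes this work, and one must be a little careful that Lemma~\ref{l1} is about the coordinates of \emph{all} vertices (or of a fixed subset), not about a subset chosen adaptively after seeing the coordinates — but since $A \subseteq V_k$ and the ``many distinct subintervals'' property is a property of the coordinate set of $V_k$ alone, this is fine.
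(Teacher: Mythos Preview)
Your proposal is correct and follows essentially the same approach as the paper's own proof: choose $l = w_{\min}^2$ so that same-subinterval vertices have $p_{uv}^L \geq c_L$, apply Lemma~\ref{l1} to conclude that any $A$ of size $\geq \delta n/2$ occupies more than $rn$ subintervals, and then combine the $\Omega(1)$ probability of $x_{kd}$ landing in one of these with the $\Omega(1)$ conditional probability of the edge via~\eqref{LB2}, finishing with a union bound over~$k$. Your explicit choice of representatives $A'$ and your discussion of the conditioning are slightly more detailed than the paper's version, but the argument is the same.
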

\begin{proof}
Choose $l:= w_\text{min}^2$.
With that choice of $l$, any two vertices $u \neq v$ in the same subinterval have distance at most $|x_{ud} - x_{vd}|_T \leq l/n$, so in particular
$p_{uv}^L(\dist{x_u - x_v}_\text{min}) \geq c_L$. By~\eqref{pBound}, this means that
\begin{equation}
\label{closeB1}
\mathbb{P}\left[ Y_{uv}^2 < p_{uv}^L \mid u,v \text{ in the same subinterval}\right]
\geq c_L/2
\end{equation}
as a probability over $Y_{uv}^2$. 

Fix a $\delta n/2 <  k \leq n$. Apply  Lemma \ref{l1} to $V_k$; this yields
that with probability $1 - e^{\Theta(n)}$, there is no set $S$ of $rn$ subintervals
such that there are at least $\delta n/2$ vertices $v \in V_k$
with $x_{vd} \in S$. In particular, this implies that for every subset $A \subseteq
V_k$ of size at least $\delta n/2$, there are more than $rn$
subintervals $I_j$ that contain at least one vertex of $A$ (i.e. there is at least
one $v \in A$ such that $x_{vd} \in I_j$). Thus, with probability at least
$rn/M = rn/\ceil{n/l}$, the position $x_{uv}$ is in a subinterval
that contains at least one vertex of $A$. If $x_{ud}$ is in the same subinterval
as $x_{vd}$ for some $v \in A$, we have $\mathbb{P}[u\sim v] \geq c_L/2$ (see~\eqref{closeB1}), and thus
\begin{equation}
\mathbb{P}\left[\exists v \in A \text{ s.t. } u_k \sim v\right] \geq rn/\ceil{n/l} \cdot c_L/2 \geq P,
\end{equation}
for some constant $P > 0$. Note that this statements holds independent of the $Y_{uv}^1$ and $x_{ui}$ for $0 \leq i < d$. 

The probability that Lemma \ref{l1} fails for a particular $k$ is at most
$e^{-\Theta(n)}$. The claim thus follows by a union bound over all $k$.
\end{proof}


We are now equipped to complete the proof of Lemma \ref{lCut}:
\begin{proof}[Proof of Lemma \ref{lCut}]
At the end of Phase 5, consider bipartitions of $K^3_\text{max}$ into two sets $C_1$ and $C_2$,
both containing at least $\delta n$ vertices. 
Let $\mu := P \cdot fn/2$, where $P$ is
the same constant as in Corollary \ref{cor1}.
We apply Lemma \ref{LMD} to $K^3_\text{max}$ with an $\varepsilon > 0$
such that $\log (1 + \varepsilon) < Pfn/16 = \mu/(8n)$. The lemma provides an
$\eta^\prime > 0$ such that there are at most  $(1+\varepsilon)^n$
bipartitions of $K^3_\text{max}$ into two subsets $C_1$ and $C_2$ with
at most $\eta^\prime n$ cross-edges at the end of Phase 5.
In particular, this is an upper bound for the number of bipartitions into two
sets which both have size at least $\delta n$, with at most $\eta^\prime n$
cross-edges.

Furthermore, a partition of $K^3_\text{max}$ also induces a bipartition of $F$.
Since $F$ has at least $fn$ vertices w.h.p., we have for at least one $i \in
\{ 1, 2 \}$ that $|F \cap C_i| \geq fn/2$; we may assume that this holds for $i = 1$.
As we require $f < \delta/12$ (and $F$ has at most $6fn$ vertices w.h.p.),
we have $|C_2 \backslash F| \geq \delta n / 2$. We can therefore
apply Corollary \ref{cor1} to obtain that in Phase 6, each vertex in $F \cap
C_1$ receives at least one edge to $C_2 \backslash F$ with probability at least
$P = \Theta(1)$. Importantly, for any pair of
vertices $v, w \in F \cap C_1$ the events ``$v$ has at least one edge to
$C_2 \backslash F$'' and ``$w$ has at least one edge to $C_2 \backslash F$'' are
independent (because we implicitly condition on the positions of the vertices 
in $C_2 \backslash F$), so we may apply a Chernoff bound. In particular, note 
that $\mu = P \cdot fn/2$ is a lower bound on the expected number of 
vertices in $F \cap C_1$ which have at least one edge to $C_2\backslash F$ 
(and as such also a lower bound on the number of edges from $C_1$ to 
$C_2    $). By the Chernoff bound (Theorem \ref{wchb} with $\varepsilon 
= 1/2$), with probability at least $1 - e^{-\frac{1}{8}\mu}$ 
there are at least $\mu/2$ such edges.

Finally, let $\eta := \min \left\{ \eta^\prime, \mu/(2n) \right \}$. According to
the first part, there are at most $(1+\varepsilon)^n$ bipartitions of $K^3_\text{max}$
into two sets such that both have size at least $\delta n$, and have at most
$\eta n \leq \eta^\prime n$ cross-edges. According to the second part, any such
bipartition has at least $\mu/2 \geq \eta n$ cross-edges after Phase 6 with
probability not less than $1 - e^{-\frac{1}{8}\mu}$. Therefore, by a
union bound over all such bipartitions, the probability that there
is a $(\delta, \eta)$-cut (i.e. a bipartition with less than $\eta n$ cross-edges)
in the subgraph of $G_4$ induced by $K^3_\text{max}$ is at most 
$(1+\varepsilon)^n e^{-\frac{1}{8}\mu} = e^{-\Theta(n)}$. 
\end{proof}

What remains is to show that going through Phase 6 does not add too many vertices to the giant.
\begin{lemma}
\label{l4}
There is a constant $f(\delta) > 0$ such that
with high probability
\begin{equation}
\left | K^4_\text{max} \right | \leq \left | K^3_\text{max} \right | + 3 \delta n. 
\end{equation}
Note that trivially $K^3_\text{max} \subseteq K^4_\text{max} $.
\end{lemma}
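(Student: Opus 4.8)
The plan is to control, for each edge inserted in Phase 6, how many vertices it can add to the giant component of $G_3$. Recall that Phase 6 only inserts edges incident to $F$ (using the edge insertion criterion \eqref{EPs}), and that $|F| \le 6fn$ w.h.p. The key insight is that adding a single edge incident to a vertex $v \in F$ can merge at most one ``new'' component of $G_3$ into the giant, and the number of vertices gained equals the size of that component (unless it was already part of the giant). So the total growth $|K^4_\text{max}| - |K^3_\text{max}|$ is at most the sum, over edges incident to $F$ in Phase 6, of the sizes of the non-giant $G_3$-components hit by those edges. It therefore suffices to show that (a) the number of Phase-6 edges incident to $F$ is $O(fn)$ w.h.p. (this is Lemma \ref{lEdgeNum}, proved via the two-sided Azuma–Hoeffding bound Theorem \ref{thm18}), and (b) the expected size of the non-giant $G_3$-component reached by a ``typical'' such edge is $O(1)$, so that the product is $O(fn)$, which is $\le 3\delta n$ once $f = f(\delta)$ is chosen small enough.

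First I would set up step (a): bound $N := |E(F, V\setminus F) \cap (\text{Phase 6 edges})|$. Each vertex $v \in F$ has bounded weight (at most $B'$), so $\Ex[\deg(v)] = O(1)$ by Lemma \ref{thm:degrees}, and summing over $F$ gives $\Ex[N] = O(fn)$. Concentration of $N$ around its mean is delicate because the edges incident to $F$ depend on the already-uncovered $d$-th coordinates of the vertices in $V\setminus F$; this is exactly the dependency issue flagged in the proof outline, and it is handled by Theorem \ref{thm18} with the ``bad'' event being the failure of the spreading property from Lemma \ref{l1} (or Corollary \ref{cor1}). I will invoke Lemma \ref{lEdgeNum} for this and conclude $N \le C' fn$ w.h.p. for some constant $C'$.

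Next, step (b): I would bound the total size of large non-giant components of $G_3$ and argue that the edges from $F$ cannot absorb more than $O(fn)$ vertices from them. By Theorem \ref{compSize} the non-giant components of $G_1$ are polylogarithmic, but in $G_3$ we have added the $d$-th-coordinate edges among $V\setminus F$, so components can be larger; nonetheless Lemma \ref{l2} gives an upper bound on the number of vertices lying in ``large'' non-giant components of $G_3$, obtained by coupling the component-exploration to a (subcritical-type) random walk whose increments are controlled whenever the $d$-th coordinates are sufficiently uniformly spread — again using Lemma \ref{l1}. Combining: the vertices absorbed into the giant during Phase 6 are either in small ($O(1)$-size, or $\mathrm{polylog}$) non-giant components — of which at most $N = O(fn)$ are hit, contributing $O(fn)\cdot\mathrm{polylog}\,n$, which is too weak as stated, so the finer bound is that on average each Phase-6 edge absorbs $O(1)$ vertices — or they lie in the few genuinely large non-giant components, whose total vertex count is itself $o(n)$ by Lemma \ref{l2}. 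Adding these contributions gives $|K^4_\text{max}| - |K^3_\text{max}| = O(fn) + o(n)$; choosing $f$ small and $n$ large makes this at most $3\delta n$ w.h.p.

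The main obstacle is step (b), specifically showing that the expected number of vertices absorbed \emph{per Phase-6 edge} is $O(1)$ rather than polylogarithmic. The difficulty is twofold: first, one must understand the component structure of $G_3$, which is not a clean GIRG (the $d$-th coordinates of $V\setminus F$ are present but those of $F$ are not), and second, one must decouple ``which component is hit'' from ``the edge exists,'' since both depend on the same uncovered coordinates. The resolution — carried out in Lemma \ref{l2} via the random-walk coupling conditioned on the spreading event of Lemma \ref{l1} — is where the real work lies; once Lemmas \ref{lEdgeNum} and \ref{l2} are in hand, assembling Lemma \ref{l4} is a short computation plus a union bound over the $e^{-\Theta(n)}$-probability failure events.
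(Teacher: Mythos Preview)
Your overall structure matches the paper's: decompose the growth $|K^4_\text{max}|-|K^3_\text{max}|$ into contributions from ``large'' and ``small'' non-giant components of $G_3$, bound the former directly via Lemma~\ref{l2} and the latter via an edge count out of $F$ via Lemma~\ref{lEdgeNum}. Two corrections are needed, one minor and one substantive.

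The minor one: the threshold $s_t$ in Lemma~\ref{l2} is a \emph{constant}, not polylogarithmic, and the bound it gives is $\le \delta n$, not $o(n)$. With constant $s_t$, each Phase-6 edge landing in a small component absorbs at most $s_t$ vertices deterministically, so your ``on average $O(1)$ per edge'' worry dissolves --- no averaging argument is needed once the right threshold is in place.

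The substantive gap: Lemma~\ref{lEdgeNum} bounds only edges from $F$ to \emph{light} vertices (weight $\le B$), not all Phase-6 edges from $F$ as you assume in step~(a). The Azuma--Hoeffding argument in its proof works on the light-vertex induced subgraph precisely because every vertex there has degree $O(\log^2 n)$ outside the bad event $\mathcal{B}$; if heavy vertices were included as random variables, changing a single heavy vertex's position could shift $\Theta(w_u)$ edges, destroying the Lipschitz bound. The crude alternative --- the deterministic degree bound $\deg(v)\le C(B'+\log^2 n)$ from Lemma~\ref{thm:degrees} --- only gives $N=O(fn\log^2 n)$, which is too weak since $f$ must be a constant. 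The paper closes this hole with an additional Lemma~\ref{l3}, which you do not mention: by the power-law condition~\eqref{PL2} there are at most $\delta n/s_t$ vertices of weight $\ge B$ in total, hence at most $\delta n$ vertices lie in small components that contain any heavy vertex. These are written off wholesale as a third $\delta n$ contribution; Lemma~\ref{lEdgeNum} then only needs to control edges into the remaining small, all-light components (the set $T$ in the paper), giving at most $\delta n/s_t$ such edges, each absorbing $\le s_t$ vertices. The three categories --- large non-giant, small with a heavy vertex, small and all-light --- contribute $\le \delta n$ each, summing to exactly the $3\delta n$ in the statement.
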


We will prove Lemma~\ref{l4} by showing that for an appropriate $f$, after Phase 5 there are few vertices 
outside of $K^3_\text{max}$ that are in large components, or in components which
have at least one ``heavy'' vertex. So these components cannot increase $K^4_{\text{max}}$ by much. Afterwards, we will show 
that $F$ has few edges to vertices which fall in neither of these categories.

Let us begin with showing that few vertices are in large components which are not the giant:
\begin{lemma}
\label{l2}
There is a constant $s_t$ such that with high probability, the graph $G_3$ has
less than $\delta n$ vertices which are in a component that a) is not the giant,
and b) has size at least $s_t$.
\end{lemma}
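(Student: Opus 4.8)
The plan is to bound the number of vertices in ``large but non-giant'' components of $G_3$ by coupling the growth of such a component to a dominated random walk that dies out quickly, so that w.h.p. almost all vertices lie in small components. First I would recall the setting: $G_3$ is built by steps $1$ through $|V\setminus F|$ of Algorithm~\ref{SampleAlg}, where in step $k$ we draw $x_{kd}$ uniformly and $Y_{kj}^2$ for $j<k$, and add edges satisfying \eqref{EPs}. For the \emph{upper} bound on component sizes we may be generous and use \emph{all} edges present in $G_3$; but the crucial observation is that the number of vertices $u_j$ ($j<k$) that $u_k$ connects to is stochastically dominated by a quantity we can control: given the $d$-th coordinates of $u_1,\dots,u_{k-1}$ (which, by Lemma~\ref{l1} applied to $V_k$, are spread out over the subintervals so that no $rn$ subintervals contain $\delta n/2$ of them), together with the weights and the already-uncovered first $d-1$ coordinates, the new vertex $u_k$ lands in a uniformly random subinterval, and the expected number of edges it creates (via \eqref{EPU} and the $d$-th coordinate) is $O(1)$ plus a small correction for heavy vertices.

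The key steps, in order. (1) Expose the component containing a fixed vertex $v$ by a BFS-style process \emph{in the order of Algorithm~\ref{SampleAlg}}: when we ``discover'' a vertex $u_k$ we reveal $x_{kd}$ and all $Y_{kj}^2$, $j<k$, hence all its neighbours among $\{u_1,\dots,u_{k-1}\}$; a later vertex $u_m$, $m>k$, will reveal its own connection to $u_k$ when it is processed. Because the steps are processed in a fixed order and the randomness of step $k$ (namely $x_{kd}$ and $(Y_{kj}^2)_{j<k}$) is independent of everything in steps $<k$, the number of new vertices added when processing step $k$ is stochastically dominated, \emph{uniformly over the history}, by a random variable $D_k$ with $\Ex[D_k] \le C'$ for an absolute constant $C'$ — here one uses \eqref{EPU}: conditioned on $x_{kd}$ being in subinterval $I_j$, the probability $u_k\sim u_i$ is $O(\min\{1, (w_{u_k}w_{u_i}/(n\,|x_{kd}-x_{id}|_T))^\alpha\})$, and summing the dominating EuGIRG-type bound over all $i<k$ (using $\alpha>1$ for convergence and $\sum_i w_{u_i}=O(n)$) gives expectation $O(w_{u_k})$; for vertices of bounded weight this is $O(1)$, and since $F$ and hence the vertices being \emph{added} to the component in $G_3$ may have unbounded weight we must keep a separate accounting of heavy vertices. (2) Restrict attention first to components consisting entirely of vertices of weight $\le B$ for a large constant $B$; for these, the discovery process is dominated by a subcritical-or-near-critical Galton–Watson/random-walk process with step mean $\le C'(B)$ that is \emph{bounded}, so the component size has an exponential tail: $\Pr[|C(v)| \ge s] \le e^{-\Omega(s)}$ once $s$ exceeds some threshold $s_t$ depending on the parameters — this is where the ``coupling to a random walk'' of the proof outline comes in, and it is essentially the {\L}uczak–McDiarmid / Bollob\'as–Janson–Riordan style subcritical exploration, made legitimate here precisely because Lemma~\ref{l1} guarantees the $d$-th coordinates stay spread out so that the per-step domination is uniform. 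Summing $\Pr[v \text{ in non-giant component of size} \ge s_t]$ over $v$ and applying Markov's inequality gives that w.h.p. at most $\delta n/2$ such vertices exist. (3) Separately bound the contribution of heavy vertices: by the power-law tail \eqref{PL2}/Lemma~\ref{thm:degrees} the number of vertices of weight $\ge B$ is $O(n/B^{\beta-1-\eta})$, and each sits in a component whose size is, by the same exploration dominated by a bounded-mean walk once we leave the heavy vertex, at most $\log^{O(1)}n$ w.h.p. by Theorem~\ref{compSize}(ii) (the non-giant components are polylog), so the total number of vertices in non-giant components containing a heavy vertex is $O(n\log^{O(1)}n / B^{\beta-1-\eta}) < \delta n/2$ for $B = B(\delta)$ a sufficiently large constant. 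Adding the two contributions yields fewer than $\delta n$ vertices in non-giant components of size $\ge s_t$, which is the claim (and we set $s_t$ to be the maximum of the threshold from step (2) and any fixed constant needed to absorb the heavy-vertex bookkeeping).

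The main obstacle I expect is step~(2): making the domination of the exploration process by a bounded-mean random walk \emph{rigorous despite the dependencies}. The subtlety is that the edge probabilities in step $k$ depend on the already-revealed $d$-th coordinates of $u_1,\dots,u_{k-1}$, so the process is not literally a Galton–Watson process; one must argue that, \emph{on the high-probability event from Lemma~\ref{l1}} that those coordinates are spread out, the conditional expected number of children is bounded by an absolute constant regardless of the history, and then invoke a stochastic-domination lemma for such ``adapted'' branching processes (or, equivalently, track the exploration as a random walk $S_t = 1 + \sum_{i\le t}(D_i - 1)$ and use that it is dominated by a walk with negative-or-small drift and bounded increments, then apply a Hoeffding/Azuma bound to get the exponential tail on the hitting time of $0$). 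A secondary delicate point is that the vertices added during Phases 4–5 can themselves have large weight, so the ``bounded step mean'' fails exactly at those vertices; this is why the heavy-vertex set must be carved off and handled by the crude polylog bound of Theorem~\ref{compSize}(ii) rather than by the random-walk argument. Once these two issues are separated, the rest is routine: a union bound over the $e^{-\Omega(n)}$-failure events, Markov on the expected count of vertices in large light components, and the power-law count of heavy vertices.
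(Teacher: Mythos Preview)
Your proposal has a genuine gap in step~(2). You want to dominate the BFS exploration of a component in $G_3$ by a branching process/random walk and deduce an exponential tail $\Pr[|C(v)| \ge s] \le e^{-\Omega(s)}$. But $G_3$ has a giant component of linear size, so the natural branching approximation is \emph{supercritical}, not subcritical: with probability $\Theta(1)$ the exploration from $v$ reaches $\Theta(n)$ vertices. A bound ``step mean $\le C'(B)$'' with $C'(B)$ some constant does not give an exponential tail unless $C'(B)<1$, and there is no reason to expect that here (indeed it must fail, or there would be no giant). Conditioning on ``$v$ not in the giant'' does not rescue the argument without a separate duality or sprinkling step, which you have not supplied. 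Your step~(3) also relies on Theorem~\ref{compSize}(ii) to say non-giant components of $G_3$ are polylogarithmic, but that theorem is stated for GIRGs, and $G_3$ is not itself a GIRG (the $d$-th coordinates of the vertices in $F$ have not been drawn, and the edges to $F$ are only the $G_1$-edges), so this invocation needs justification.

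The paper's proof is entirely different and avoids exploring individual components. It is an \emph{absorption} argument: let $A_k$ be the set of vertices that, after step $k-1$, lie in a non-giant component of size $\ge s_t$. During Phase~5 the vertices $u_k$ being processed belong to $K_{\max}^1\setminus F$, hence are already in the giant. By Corollary~\ref{cor1}, whenever $|A_k|\ge \delta n$ the new vertex $u_k$ connects to $A_k$ with probability at least $P$, and each such ``hit'' merges a whole component of size $\ge s_t$ into the giant, removing $\ge s_t$ vertices from $A$. Since Phase~5 has $\Omega(n)$ steps, a Chernoff bound gives $\Omega(n)$ hits w.h.p.; choosing $s_t > 4/(s_{\max}P)$ makes the total number of removed vertices exceed $n$, a contradiction, unless $|A_k|$ drops below $\delta n$ at some point (and it cannot increase past $\delta n$ again by the same argument). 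The ``random walk'' alluded to in the proof outline is this process $k \mapsto |A_k|$, not a component exploration.
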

\begin{proof}
Choose $s_t$ to be larger than $4/(s_\text{max}P)$, where $P$ is the same as in
Corollary \ref{cor1}.
Consider Phase 5 of Algorithm \ref{SampleAlg}, which is used to generate $G_3$. 
It consists of steps from $|V\backslash K_\text{max}^1|+1$ to $|V\backslash F|$. Recall that
\begin{equation}
\left\{ u_k \in V \mid |V\backslash K_\text{max}^1| < k \leq |V\backslash F| \right\} = K_\text{max}^1 \backslash F.
\end{equation}
Let $A_k$ be the set of vertices in $G_2$ which have the following property:
At the end of step $k-1$,
they are in a component of size at least $s_t$ but not in the giant.
For each step $k$ of Phase 5,  we define the indicator random variable $Z_k$ which is
1 if and only if $A_{k+1} \not = A_k$, i.e. if and only if step $k$ adds an edge
from $u_k$ to at least one vertex in $A_k$ (thus attaching it to the giant).
We apply  Corollary \ref{cor1} yielding that with high probability, at each step $k$ with $|A_k| \geq \delta n$, 
vertex $u_k$ receives an edge to $A_k$ with  probability at least $P$, i.e.
\begin{equation}
\mathbb{P}\left[Z_k  \mid  |A_k| \geq \delta n\right] \geq P\; ;
\end{equation}
we call this a ``hit''. Additionally, we define indicator random variables $Z^\prime_k$
as follows:
\begin{equation}
Z^\prime_k :=
\begin{cases}
Z_k \text{ , if $|A_k| \geq \delta n$}\\
B_{P} \text{ , otherwise.}
\end{cases}
\end{equation} 
Here, $B_{P}$ is a indicator random variable with success probability $P$.
Note that while the $Z^\prime_k$ are not completely independent, we have
$\mathbb{P}[Z^\prime_k =1 \mid A_k]  \geq P$ for all $k$ and all $A_k \subseteq V_k$, i.e.
not depending on the outcome of the previous steps.
Therefore, we may lower-bound
the sum over the $Z^\prime_k$ by a sum over $|K^1_\text{max} \backslash F|$ independent
instances of $B_{P}$. We get
\begin{equation}
\mathbb{E}\left[\sum_{k = |V\backslash K^1_\text{max}|+1}^{|V \backslash F|} Z^\prime_k\right] \geq 
\mathbb{E}\left[\sum_ {k = |V\backslash K^1_\text{max}|+1}^{|V \backslash F|} B_{P}\right] > P 
\cdot s_\text{max} n /2 =: \mu.
\end{equation}
For the last step, we use linearity of expectation and the fact that we require
$f < s_\text{max}/12$ (and $F$ has at most $6fn$ vertices w.h.p),
which implies $|K^1_\text{max} \backslash F| > s_\text{max} n /2$.
Applying the Chernoff bound (Theorem \ref{wchb}) to $\sum_k B_{P}$ yields 
that w.h.p. $\sum_k  B_{P} \geq \mu/2$ and therefore also that w.h.p. $\sum_k 
Z^\prime_k \geq \mu/2$. Now, under the assumption of $\sum_k Z^\prime_k \geq \mu/2$,
there are two possibilities: If $A_{|V \backslash F|+1}$ is smaller than 
$\delta n$, we are done (as this is exactly the set of vertices in components
of size at least $s_t$ but not the giant in $G_3$). Otherwise, each case of 
$Z^\prime_k = 1$ actually resulted from a ``hit'', so at least $s_t \cdot \mu/2$ 
vertices were eliminated from $A$. However, by our choice of $s_t > 4/(s_\text{max}P) = 2n/\mu$,
this leads to a contradiction. Therefore, under the assumption of $\sum_k Z^\prime_k \geq \mu/2$ (and thus
with high probability),
the set $A_{|V \backslash F|+1}$ has less than  $\delta n$ vertices
as desired.
\end{proof}

Furthermore, most vertices in small components also have small weight:
\begin{lemma}
\label{l3}
There is a constant $B > 0$ such that there are at most
$\delta n / s_t$ vertices with weight at least $B$ which are in a component of
size at most $s_t$ in $G_3$. In particular, there are at most $\delta n$ vertices in
$G_3$ which are in a component of size at most $s_t$ and share that component
with a vertex with weight at least $B$. For convenience, we choose $B \geq B^\prime$
(so that the weight of vertices in $F$ is guaranteed to be at most $B$).
\end{lemma}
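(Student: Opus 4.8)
The key observation is that this lemma is purely a statement about the deterministic weight sequence: how many vertices carry weight at least a given constant is fixed by $\mathbf{w}$ and has nothing to do with the randomness used to build $G_3$. So the plan is to read the bound off directly from the power-law tail condition~\eqref{PL2}. Concretely, I would apply~\eqref{PL2} with the constant $\eta := 1$, which yields a constant $c_2 = c_2(1) > 0$ with $\#\{v \in V \mid w_v \ge w\} \le c_2 n / w^{\beta-2}$ for all $w \ge w_\text{min}$; note that $\beta - 2 \in (0,1)$ is a positive constant since $\beta \in (2,3)$, so this is a genuinely decreasing tail.

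Next I would take $B$ to be a sufficiently large constant, concretely $B := \max\{B^\prime,\ w_\text{min},\ (c_2 s_t/\delta)^{1/(\beta-2)}\}$, which depends only on $\delta$, on the constant $s_t$ already fixed in Lemma~\ref{l2}, and on the model parameters. Substituting $w = B$ into the tail bound gives that at most $c_2 n / B^{\beta-2} \le \delta n / s_t$ vertices of $G_3$ have weight at least $B$ at all. Since this bounds the \emph{total} number of heavy vertices, it a fortiori bounds the number of heavy vertices sitting in components of size at most $s_t$, which is the first assertion.

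For the ``in particular'' assertion I would use a simple charging argument: every component of $G_3$ of size at most $s_t$ that contains a vertex of weight at least $B$ can be assigned one of its own heavy vertices, and distinct such components receive distinct vertices, so there are at most $\delta n / s_t$ of them; since each contributes at most $s_t$ vertices, together they contain at most $s_t \cdot \delta n / s_t = \delta n$ vertices. Finally $B \ge B^\prime$ holds by the choice of $B$, so since Phase~2 only adds vertices of weight below $B^\prime$ to $F^\prime$ (hence to $F$), every vertex of $F$ automatically has weight at most $B$.

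I do not expect a real obstacle here. The argument uses no concentration inequality and no union bound — it is essentially a one-line consequence of~\eqref{PL2} — and the only points needing care are that the quantifiers are resolved in the correct order ($\delta$ and $s_t$ first, then $\eta$ and $c_2$, then $B$), and that $B$ is taken at least $B^\prime$ so that the closing sentence about $F$ holds.
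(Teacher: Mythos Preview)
Your proposal is correct and matches the paper's proof essentially line for line: the paper also sets $\eta=1$ in~\eqref{PL2}, chooses $B = \max\{B',(c_2 s_t/\delta)^{1/(\beta-2)}\}$, reads off the bound $\delta n/s_t$ on the number of heavy vertices, and then notes the ``in particular'' is immediate. Your version is slightly more careful (including $w_{\min}$ in the maximum and spelling out the charging argument where the paper just writes ``clearly''), but the approach is identical.
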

\begin{proof}
We use \eqref{PL2} to bound the total number of vertices with weight at least $B$. In
particular, we set $\eta=1$ in \eqref{PL2} and $w$ to $\max\{B^\prime,
(c_2s_t/\delta)^{1/(\beta-2)}\} =:~B$. Inequality \eqref{PL2}
then yields that there are at most $\delta n / s_t$ vertices with weight at least~$B$.
Furthermore, there are clearly no more than $\delta n$ vertices which are in a component
of size at most $s_t$ and share that component with a vertex with weight at least $B$.
\end{proof}

It only remains to bound the number of edges that go from $F$ to low-weight vertices.
\begin{lemma}
\label{lEdgeNum}
There is an $f > 0$ such that with high probability, there are at most
$\delta n /s_t$ edges from $F$ to vertices of weight at most $B$.
\end{lemma}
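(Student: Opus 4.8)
The plan is to bound the number of edges from $F$ to vertices of weight at most $B$ by a two-step argument: first compute the expectation, showing it is at most (roughly) $f \cdot \mathrm{const} \cdot n$, and then apply a concentration bound to get the high-probability statement. Choosing $f$ small enough then makes the bound at most $\delta n/s_t$. The expectation computation proceeds as follows. Edges from $F$ that are added in Phase~6 come from the criterion~\eqref{EPs}; since $|F| \le 6fn$ w.h.p., it suffices to bound, for a fixed vertex $v\in F$ of weight $w_v \le B$, the expected number of its neighbours of weight at most $B$. By the degree bound in Lemma~\ref{thm:degrees} we have $\Ex[\deg(v)] \le C w_v \le CB$, so the expected number of all edges incident to $F$ is at most $6fn \cdot CB = O(fn)$. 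This already gives $\Ex[\#\{\text{edges from } F \text{ to weight-}{\le}B \text{ vertices}\}] = O(fn)$, and choosing $f$ small relative to $\delta/(s_t B)$ makes this at most $\tfrac12 \delta n/s_t$.

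The remaining work is the concentration. Here I would use the Azuma--Hoeffding bound with two-sided error events (Theorem~\ref{thm18}), since the quantity in question depends on the positions $x_{ud}$ and the variables $Y_{uv}^2$, which are independent, but an individual position change could in principle flip many edges (when a heavy-ish vertex moves into a dense region). The function $g$ is the number of edges from $F$ to low-weight vertices; the underlying independent variables are the $d$-th coordinates and the $Y^2$-variables. The bad set $\mathcal B$ should be the event that some vertex has degree exceeding $C(w_v + \log^2 n)$ (which by Lemma~\ref{thm:degrees} has probability $n^{-\omega(1)}$), together with, if needed, the failure event of Lemma~\ref{l1}/Corollary~\ref{cor1} that controls how many vertices land in any small collection of subintervals. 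Outside $\mathcal B$, changing the $d$-th coordinate of one vertex changes $g$ by at most $O(\log^2 n)$ (it can only affect edges incident to that one vertex, and on the good event every vertex has degree $O(\log^2 n)$), and changing one $Y^2_{uv}$ changes $g$ by at most $1$; so we may take $c = O(\log^2 n)$ and $M = O(n)$ (a trivial bound on the total number of edges). Theorem~\ref{thm18} with $t = \tfrac12\delta n/s_t$ then gives deviation probability $2e^{-\Theta(n^2/(n\log^4 n))} + \mathrm{poly}(n)\cdot n^{-\omega(1)} = e^{-\Omega(n/\log^4 n)} + n^{-\omega(1)} \to 0$.

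The main obstacle is the Lipschitz constant: a single coordinate move affecting up to $\Theta(\log^2 n)$ edges forces $c = \Theta(\log^2 n)$, but since $t = \Theta(n)$ and $m = O(n)$, the exponent $t^2/(mc^2)$ is still $\Omega(n/\log^4 n) \to \infty$, so the bound goes through comfortably; the care needed is in defining $\mathcal B$ so that the degree bound of Lemma~\ref{thm:degrees} (which holds with probability $1 - n^{-\omega(1)}$) is available simultaneously for all vertices, and in verifying that the error term $(2mM/c + 1)\Pr[\mathcal B]$ is still $o(1)$ with these polynomial prefactors. An alternative, slightly cleaner route that avoids $\log^2 n$ altogether is to work conditionally on the weights and on $G_1$, fix the set $F$, and observe that the edges from $F$ to low-weight vertices added in Phase~6 are, after conditioning on the positions of $V\setminus F$, determined by independent coin flips indexed by pairs $(v,u)$ with $v\in F$; then a direct Chernoff bound (Lemma~\ref{wchb}(iii)) on this sum of independent indicators with mean $O(fn)$ suffices, provided one first discards the low-probability event that some weight-$\le B$ vertex in $V\setminus F$ has abnormally large degree. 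I would present the Azuma-based argument as the main line since it is the most robust to the dependency structure, but note the Chernoff shortcut if $F$ and the positions of $V\setminus F$ can be cleanly frozen first.
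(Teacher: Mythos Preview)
Your overall strategy is right and matches the paper's: bound the expectation by $O(fn)$ via the degree bound, then apply Theorem~\ref{thm18} with the bad event being anomalously large degrees. But there is a genuine gap in the concentration step. You take as independent variables the $d$-th coordinates $x_{ud}$ together with the individual $Y^2_{uv}$, and then compute the deviation probability as $e^{-\Theta(n^2/(n\log^4 n))}$. That arithmetic silently uses $m=O(n)$; however, with the $Y^2_{uv}$ taken individually there are $\Theta(n^2)$ of them, so $m=\Theta(n^2)$. Since Theorem~\ref{thm18} uses a single global $c$, you are forced to take $c=\Theta(\log^2 n)$, and the exponent becomes $t^2/(mc^2)=\Theta(1/\log^4 n)=o(1)$, which gives nothing. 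The paper's key move is to \emph{group} the edge variables by vertex, taking $Y_v=(Y_{v1},\ldots,Y_{v,v-1})$ as one coordinate per $v$. This keeps $m=2|S|-1=O(n)$; the price is that the Lipschitz constant for a grouped $Y_v$ rises to $\deg(v)$, but on the good event that is still $O(\log^2 n)$, matching the position variables, and now the exponent really is $\Omega(n/\log^4 n)$.

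Two smaller points. First, the paper works with $F'$ rather than $F$, and feeds \emph{all} positions and \emph{all} $Y$-variables (not just the $d$-th coordinate and $Y^2$) into Theorem~\ref{thm18}. Since $F'$ is independent of the graph sampling, $\Ex[g]$ is then the honest unconditional expectation and the bound $\Ex[\deg(v)]\le Cw_v\le CB$ applies directly. In your version, conditioning on Phases~1--3 leaves $\Ex[g\mid\text{Phases 1--3}]$ as a random quantity depending on the first $d-1$ coordinates and the $Y^1$-values, and you would still owe an argument that this is $O(fn)$ with high probability. Second, the absolute bound $M$ on $g$ is $O(n^2)$, not $O(n)$; this is harmless because $M$ only appears multiplied by $\Pr[\mathcal B]=n^{-\omega(1)}$, but it should be stated correctly. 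Your alternative Chernoff route is plausible, but as you note it also requires conditioning on \emph{all} positions (including those of $F$), and then controlling the conditional mean $\sum_{v\in F,\,u\in S}p_{vu}$ with high probability over the positions; that is an extra concentration argument you have not supplied.
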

\begin{proof}
Recall how $F$ was defined: We include each vertex with weight less than $B^\prime$
independently with probability $4f/s_\text{max}$ in a set $F^\prime$. (The weight
bound $B^\prime \leq B$ was chosen such that there are at least $s_\text{max}n/2$
vertices with weight less than $B^\prime$ in $K_\text{max}^1$.) Then, the set $F$
is given as $F^\prime \cap K_\text{max}^1$. Note that the generation of $F^\prime$
is completely independent of sampling the actual graph, so we might as well draw
$F^\prime$ before generating even $G_1$. Using a Chernoff bound (Theorem \ref{wchb}i
with $\varepsilon = 1/2$), we can show that w.h.p we have $|F^\prime|
\leq (6fn/s_\text{max}) =: f^\prime n$. Let $S$ be the set of all vertices
which have weight at most $B$; note that $F^\prime \subseteq S$.
We will show that for small enough $f$ (and thus $f^\prime$), there are at most
$\delta n/s_t$ edges from $F^\prime$ to $S$ in $G_4$; this completes the
argument, as $F =  F^\prime \cap K_\text{max}^1 \subseteq F^\prime$. 

We will use the Azuma-Hoeffding bound with two-sided error event, Theorem~\ref{thm18}; in the following, we
describe the choice of parameters for the theorem. 
The induced subgraph $G_4[S]$ of $G_4$ is determined by the following $2|S|-1$ independent random variables. We may assume that $S = \{1,\ldots,|S|\}$, so that we have an ordering of the vertices. (It does not need to be the same ordering as in Algorithm \ref{SampleAlg}). Firstly, for every $v\in S$ we include its random position $x_v$ as a random variable. Secondly, for each $v \in S \setminus\{1\}$ we include the random variable $Y_v :=(Y_{v1},\ldots,Y_{v\, {v-1}})$, which is uniformly at random in $[0,1]^{v-1}$. The random variables $Y_{uv}$ are the same as in Section~\ref{secLB}, so we include the edge $\{u,v\}$ if and only if $Y_{uv} \leq p_{uv}$. As in Theorem~\ref{thm18}, we denote the product space corresponding to the $2|S|-1$ random variables by $\Omega$. (By abuse of notation, we also denote the sample space by $\Omega$.) For $\omega \in \Omega$, the function $g(\omega)$ is simply defined as the number of edges from $F^\prime$  to $V$ in $\omega$. 


Note that $0 \leq g(\omega) \leq |F^\prime| \cdot |S| \leq f^\prime n^2$.
This corresponds to $M$ in the theorem. We define the ``bad'' subset $\mathcal{B}$
as follows:
\begin{equation}
\mathcal{B} := \{ \omega \in \Omega \mid \exists u \in S \; : \; 
\text{deg}(u) \geq 2C\log^2 n \}
\end{equation}
where $C>0$ is the constant from Lemma~\ref{thm:degrees}. Note that by Lemma \ref{thm:degrees} (and using the fact that the weights of vertices
in $S$ are bounded by $B$), we have $\mathbb{P}[\mathcal{B}] = n^{-\omega(1)}$. Moreover, observe that for any $\omega, \omega^\prime \in \Omega \setminus {\mathcal{B}}$ 
which differ in at most two components, we have
\begin{equation}
|g(\omega) - g(\omega^\prime)| \leq 4C \log^2 n =: c
\end{equation}
since the outcome of every $x_u$ and $Y_u$ affects at most $2C\log^2 n$ edges
(because $\omega$ and $\omega^\prime$ are both in $\Omega \setminus {\mathcal{B}}$ and
all edges a single coordinate affects have a vertex in common).
This corresponds to $c$ in the theorem.

Furthermore, Lemma \ref{thm:degrees} also implies that the expected degree of each vertex in
$S$, and thus $F^\prime$, is at most $B\cdot C$. Since $g$ is upper-bounded by the sum of
degrees of vertices in $F^\prime$, we get that
\begin{equation}
\mathbb{E}[g(Z)] \leq \mathbb{E}\left[\sum_{u \in F^\prime} \text{deg}(u)\right] \leq BCf^\prime n.
\end{equation}
This allows us to set $t := 2BCf^\prime n - \mathbb{E}[g(Z)] \geq BCf^\prime n$
for Theorem  \ref{thm18}.

Putting all of this together, Theorem \ref{thm18} yields
\begin{equation}
\mathbb{P}\left[g(Z) -\mathbb{E}[g(Z)] \geq t\right] \leq 
2e^{-\frac{(BCf^\prime n)^2}{32 \cdot 2n 
(2\log^2 n)^2}}
+(2\frac{2n\cdot f^\prime n^2}{2\log^2 n} + 1)n^{-\omega(1)} = n^{-\omega(1)}. 
\end{equation}
If we choose $f$ small enough such that $2BCf^\prime < \delta/s_t$, we get the desired
statement, since for such a choice of $f^\prime$, the theorem implies 
that w.h.p. $g(Z) \leq \mathbb{E}[g(Z)] + t = 2BCf^\prime n < \delta n/s_t$.
\end{proof}

We now have all we need to complete the proof of Lemma \ref{l4}
\begin{proof}[Proof of Lemma \ref{l4}]
Each vertex in $G_3$ is an element of exactly one of the following sets:
\begin{enumerate}
\item $K^3_\text{max}$.
\item The set of vertices in a component of size at least $s_t$, but not in $K_\text{max}^3$.
\item The set of vertices in a component of size less than $s_t$
which contains at least one vertex with weight at least $B$.
\item The set of vertices in a component of size less than $s_t$ in which all 
vertices have weight less than $B$. We call this set $T$.
\end{enumerate}
Going through Phase 6 will attach some vertices from sets 2--4 to 
$K^3_\text{max}$; our goal is to show that w.h.p at most $3\delta n$ vertices are 
attached this way. Lemma \ref{l2} tells us that w.h.p. the second set contains at most 
$\delta n $ vertices. Lemma \ref{l3} gives the same result for the third set. 
Therefore these two sets can contribute at most $\delta n$ vertices each, and  
the remaining $\delta n$ must come from $T$ (set 4).  However, according to Lemma
\ref{lEdgeNum}, there are only at most $\delta n /s_t$ edges from $F$ to $T$ w.h.p. 
(since vertices in $T$ all have weight less than $B$). Each
such edge can attach at most $s_t$ vertices from $T$ to $K^3_\text{max}$, so w.h.p at
most $\delta n$ edges from $T$ are attached, completing the argument.
\end{proof}

\begin{theorem}
\label{mainThmH}
With high probability, $K^4_\text{max}$ has no $(4\delta, \eta)$-cut, where
$\eta$ is the same as in Lemma \ref{l4}.
\end{theorem}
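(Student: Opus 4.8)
The plan is to deduce Theorem~\ref{mainThmH} from Lemmas~\ref{lCut} and~\ref{l4} by exactly the ``deletion'' step that finishes the Erd\H{o}s--R\'enyi argument sketched in Section~\ref{secComp}. Concretely, I would first fix the constant $f=f(\delta)>0$ small enough to satisfy \emph{all} the upper bounds imposed on it in the construction and in the earlier lemmas simultaneously (the requirement $f<\min\{\delta/12,\,s_\text{max}/12\}$ from Phase~2, together with the bounds coming from Lemma~\ref{lEdgeNum}, and hence Lemma~\ref{l4}); each such requirement is an upper bound on $f$ depending only on $\delta$ and on model parameters, so a common admissible positive value exists. Let $\eta>0$ be the constant produced by Lemma~\ref{lCut} for this $f$ (this is the $\eta$ appearing in the statement). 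I would then work on the intersection of the finitely many ``with high probability'' events invoked in the proofs of Lemmas~\ref{lCut} and~\ref{l4} — in particular $fn\le |F|\le 6fn$, that $K^3_\text{max}$ has no $(\delta,\eta)$-cut in $G_4$, and that $|K^4_\text{max}|\le |K^3_\text{max}|+3\delta n$; this intersection still has probability $1-o(1)$.

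Assume this good event and suppose, for contradiction, that $K^4_\text{max}$ admits a $(4\delta,\eta)$-cut, i.e.\ a partition of $K^4_\text{max}$ into sets $C_1,C_2$ with $|C_1|,|C_2|\ge 4\delta n$ and at most $\eta n$ cross-edges in $G_4$. Since $K^3_\text{max}\subseteq K^4_\text{max}$ and $|K^4_\text{max}\setminus K^3_\text{max}|\le 3\delta n$ by Lemma~\ref{l4}, the sets $C_i':=C_i\cap K^3_\text{max}$ form a partition of $K^3_\text{max}$, and $|C_i'|\ge |C_i|-3\delta n\ge \delta n$ for $i\in\{1,2\}$. Every cross-edge of $(C_1',C_2')$ inside the subgraph of $G_4$ induced by $K^3_\text{max}$ is in particular a cross-edge of $(C_1,C_2)$ (restricting to an induced subgraph only removes edges), so $(C_1',C_2')$ has at most $\eta n$ cross-edges. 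Thus $(C_1',C_2')$ is a $(\delta,\eta)$-cut of $K^3_\text{max}$ in $G_4$, contradicting Lemma~\ref{lCut}. Hence on the good event $K^4_\text{max}$ has no $(4\delta,\eta)$-cut, proving the theorem.

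I do not expect a real obstacle here: all the difficulty has already been absorbed into Lemma~\ref{lCut} (no sparse cut of $K^3_\text{max}$ survives Phase~6) and Lemma~\ref{l4} (Phase~6 enlarges the giant by at most $3\delta n$ vertices), and what is left is the routine observation that deleting few vertices from a putative cut of $K^4_\text{max}$ still leaves a cut of $K^3_\text{max}$ with both sides of linear size and no more cross-edges. The only mild point of care is the simultaneous choice of $f$ noted above, since both lemmas (through Corollary~\ref{cor1}, the size of $F$, and Lemmas~\ref{l2}, \ref{l3}, \ref{lEdgeNum}) constrain it. As a closing remark (not needed for this statement but for Theorem~\ref{mainThm}), Theorem~\ref{compSize} shows that $K^4_\text{max}\supseteq K^1_\text{max}$ is w.h.p.\ the unique giant component of $G_4=G$, so rescaling $\delta\mapsto\delta/4$ recovers the Main Theorem.
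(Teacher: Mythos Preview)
Your proposal is correct and follows essentially the same argument as the paper: assume a $(4\delta,\eta)$-cut of $K^4_\text{max}$, restrict both sides to $K^3_\text{max}$, use Lemma~\ref{l4} to guarantee each restricted side still has size at least $\delta n$, and observe that the number of cross-edges can only decrease, contradicting Lemma~\ref{lCut}. Your version is in fact more carefully worded than the paper's (you make explicit the simultaneous choice of $f$ and the intersection of the finitely many w.h.p.\ events), and you correctly identify that the $\eta$ in the statement is the one produced by Lemma~\ref{lCut} (the paper's pointer to Lemma~\ref{l4} appears to be a typo).
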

\begin{proof}
Assume there is a $(4 \delta, \eta)$-cut of $K^4_\text{max}$; such a cut induces
a bipartition of vertices into two set $C_1$ and $C_2$ which are both of size at least
$4\delta n$. Consider the restriction of $C_1$ and $C_2$ to $K^3_\text{max}$. Lemma \ref{l4}
implies that for $i \in \{1,2\}$ we have $|  K^3_\text{max} \cap C_i| \geq 
|C_i| - 3\delta n \geq \delta n$.
Therefore, any $(4 \delta, \eta)$-cut of $K^4_\text{max}$ induces a $(\delta, \eta)$-cut of
$K^3_\text{max}$, but according to Lemma \ref{lCut}, such a cut does not exist w.h.p.
\end{proof}

Since Theorem~\ref{mainThmH} is just a slight reformulation of our main Theorem~\ref{mainThm}, this concludes the proof.

\section{Conclusion}
\label{secConc}
We studied the separator properties of the MCD model for dimensions greater than
one, and proved that this class of random graphs does not have small separators.
This is a substantial difference to the one-dimensional case and to EuGIRGs. Our result shows that the underlying geometry decides on whether there are small separators. Future
research could include investigating distance functions that interpolate between MCD and
euclidean distance. Under the MCD model, two vertices are likely to connect
if their positions are close along the first axis, OR the second axis, \ldots , OR the $d$-th axis,
whereas in Euclidean GIRGs, they have to be close along the first axis, AND the second axis,
\ldots , AND the $d$-th axis. Being able to easily mix and match these two would allow for formulating
intuitive models, with connection criteria resembling boolean formulae (such as
``Two people are likely to be friends if they both like football AND live close to
each other, OR if both like the same particular online game'').
Hopefully, the tools laid out in this paper for analyzing the MCD model will
prove useful for these questions.

\appendix

\section{Appendix: Clustering Coefficient}\label{sec:clustering}

In this appendix we show that w.h.p. the clustering coefficient of an MCD-GIRG is $\Omega(1)$. In fact, our result holds for any GIRG model for which the distance function satisfies a \emph{stochastic triangle inequality}, i.e. two uniformly random points in the $\eps$-neighbourhood of some position $x \in \mathbb{T}^d$ have probability $\Omega(1)$ to have distance at most $O(\eps)$. The proof follows closely the proof for EuGIRGs from \cite{BKLeuclidean}, except that we replace the deterministic triangle inequality by its stochastic version. We use the notation $u\sim v$ to indicate that $u$ and $v$ are adjacent.

For the definition of clustering coefficient, there are several slightly different definitions. We follow the convention in~\cite{WattsStrogatz}. 
\begin{definition}\label{def:cc}
For a graph $G=(V,E)$ the clustering coefficient of a vertex $v\in V$ is defined as
\[
\cc(v) := \begin{cases}\frac{1}{\binom{\deg(v)}{2}}\cdot |\text{triangles in $G$ containing $v$}|, & \text{ if } \deg(v) \geq 2, \\ 0, & \text{ otherwise,}\end{cases}
\]
and the (mean) clustering coefficient of $G$ is $\cc(G) := \frac{1}{|V|} \sum_{v\in V}\cc(v)$.
\end{definition}

In case of GIRGs, we can use that the edges are sampled independently after the positions are fixed, to get the following expression for $\Ex[\cc(G)]$. 

\begin{lemma}\label{lem:localclustering}
Let $G$ be a GIRG, and let $v \in V$ be a vertex. 
Then 
\[
\Ex[\cc(v)] = \Pr[\deg(v)\geq 2]\cdot \frac{1}{\binom{n-1}{2}}\sum_{u,u' \in V\setminus \{v\}, u\neq u'}\Pr[u\sim u' \mid u \sim v, u' \sim v].
\]
\end{lemma}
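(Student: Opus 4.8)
The plan is to compute $\Ex[\cc(v)]$ by conditioning on the vertex positions $\mathbf{x} = (x_1,\dots,x_n)$, since once the positions are fixed the edges are inserted by independent coin flips. First I would write $\cc(v)$ as $\mathbf{1}[\deg(v)\geq 2]\cdot \binom{\deg(v)}{2}^{-1}\cdot T(v)$, where $T(v)$ is the number of triangles through $v$, i.e.\ the number of unordered pairs $\{u,u'\}\subseteq V\setminus\{v\}$ with $u\sim v$, $u'\sim v$, and $u\sim u'$. The key observation is that $\binom{\deg(v)}{2}$ counts exactly the unordered pairs $\{u,u'\}$ of common-or-not neighbours with $u\sim v$ and $u'\sim v$, so the ratio $T(v)/\binom{\deg(v)}{2}$ is, conditioned on the set $N(v)$ of neighbours of $v$, the empirical fraction of neighbour-pairs that are themselves adjacent.

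Next I would take expectation over the randomness of the edges not incident to $v$, conditioned on $N(v)$: for a fixed pair $u,u' \in N(v)$ the edge $\{u,u'\}$ is present with probability $p_{uu'}$, and crucially this is independent of the event $\{u,u'\}\subseteq N(v)$ because the three potential edges $\{u,v\}$, $\{u',v\}$, $\{u,u'\}$ are governed by three distinct independent coin flips (given positions). Hence, conditioned on $\{\deg(v)\geq 2\}$ and on $N(v)$,
\[
\Ex\!\left[\tfrac{T(v)}{\binom{\deg(v)}{2}}\;\middle|\;N(v)\right] \;=\; \frac{1}{\binom{|N(v)|}{2}}\sum_{\{u,u'\}\subseteq N(v)} \Pr[u\sim u']\,,
\]
where the edge probabilities here are still conditioned on the positions. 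The slightly delicate point is that this expression depends on $N(v)$ only through which pairs it contains, so I would then average over $N(v)$: each pair $\{u,u'\}$ lies in $N(v)$ with probability $p_{uv}p_{u'v}$ (independent coin flips), and one checks that the law of the random pair drawn uniformly from $N(v)$, given $\deg(v)\geq 2$, assigns to $\{u,u'\}$ exactly the weight proportional to $p_{uv}p_{u'v}$, which combined with the factor $\Pr[u\sim u'\mid\text{positions}]$ and Bayes' rule turns into $\Pr[u\sim u', u\sim v, u'\sim v \mid \text{positions}]/\Pr[u\sim v, u'\sim v\mid\text{positions}]$. Summing over ordered pairs introduces the factor $\tfrac12$ that reconciles $\binom{n-1}{2}$ with unordered pairs, and finally taking expectation over the positions $\mathbf{x}$ collapses everything to the unconditional probabilities in the statement, with the $\Pr[\deg(v)\geq 2]$ factor emerging because the clustering coefficient is defined to be $0$ on the complementary event.

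The main obstacle I expect is handling the conditioning cleanly: the random variable $\binom{\deg(v)}{2}$ in the denominator is itself correlated with the numerator $T(v)$, so one cannot naively split the expectation of the ratio into a ratio of expectations. The right way around this is to condition on the full neighbourhood $N(v)$ first (so that the denominator becomes a constant $\binom{|N(v)|}{2}$), exploit independence of $\{u\sim u'\}$ from $\{u,u'\in N(v)\}$ at the level of a fixed pair, and only afterwards use a symmetry / exchangeability argument over pairs together with Bayes' rule to rewrite $\Pr[u\sim u']\cdot\Pr[\{u,u'\}\subseteq N(v)] / \Pr[u,u'\in N(v)]$ as the stated conditional probability $\Pr[u\sim u'\mid u\sim v, u'\sim v]$. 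Once the position-conditioned identity is in place, removing the conditioning on positions is immediate by the tower property since the final expression no longer references $\mathbf{x}$.
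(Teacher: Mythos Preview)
Your plan diverges from the paper's at the averaging step, and that step contains a genuine error. After conditioning on positions and on the neighbourhood $N(v)$, you assert that ``the law of the random pair drawn uniformly from $N(v)$, given $\deg(v)\geq 2$, assigns to $\{u,u'\}$ exactly the weight proportional to $p_{uv}p_{u'v}$''. This is false. With positions fixed, take three vertices $a,b,c$ besides $v$ with $p_{av}=p_{bv}=1$ and $p_{cv}=\tfrac12$. Then $N(v)\in\{\{a,b\},\{a,b,c\}\}$ each with probability $\tfrac12$, and a uniform pair from $N(v)$ equals $\{a,b\}$ with probability $\tfrac12\cdot 1+\tfrac12\cdot\tfrac13=\tfrac23$, while the weight proportional to $p_{av}p_{bv}=1$ against $p_{av}p_{cv}=p_{bv}p_{cv}=\tfrac12$ would give $\tfrac12$. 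The probability that a fixed pair is the uniform choice from $N(v)$ depends on the entire vector $(p_{wv})_{w\neq v}$, not just on the two entries $p_{uv},p_{u'v}$. Since the identity already fails conditioned on positions, your final ``remove the conditioning on positions by the tower property'' step is not available.

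The paper does \emph{not} condition on the full set $N(v)$. Instead it draws an auxiliary ordered pair $(u,\tilde u)$ uniformly from all of $V\setminus\{v\}$, sets $\mathcal E_N=\{u\sim v,\ \tilde u\sim v\}$ and $\mathcal E_\Delta=\mathcal E_N\cap\{u\sim\tilde u\}$, and conditions on the \emph{value} $\deg(v)=s$. The point of conditioning on $s$ (rather than on $N(v)$) is that $\Pr[\mathcal E_N\mid G]=\binom{s}{2}/\binom{n-1}{2}$ becomes a deterministic function of $s$, so the expectation of the ratio collapses honestly to a conditional probability: $\Ex[\cc(v)\mid\deg(v)=s]=\Pr[\mathcal E_\Delta\mid\mathcal E_N,\deg(v)=s]$. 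The paper then argues that this last quantity is independent of $s$ by using that, once all positions are fixed, the edge $\{u,\tilde u\}$ is a coin flip independent of every edge incident to $v$; summing over $s$ produces the factor $\Pr[\deg(v)\geq 2]$. You are implicitly conflating two different random pairs: a pair uniform in $V\setminus\{v\}$ conditioned on $\mathcal E_N$ (whose law \emph{is} proportional to $\Pr[u\sim v,\,u'\sim v]$) versus a pair uniform in the realised $N(v)$ averaged over $G$ given $\deg(v)\geq 2$ (whose law is not). The paper's decomposition by $\deg(v)=s$ is precisely the device that bridges the two, and it is missing from your outline.
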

\begin{proof}
Fix the vertex $v$, and let $u, \tilde u \in V\setminus \{v\}$ uniformly at random subject to $u \neq \tilde u$. Moreover, let $\mathcal E_{N}$ be the event that $u$ and $\tilde u$ are both adjacent to $v$, and let $\mathcal E_\Delta$ be the event that $v, u, \tilde u$ form a triangle. Then we may write $\Ex[\cc(v)] = \binom{n-1}{2}/\binom{\deg(v)}{2} \cdot \Pr[\mathcal E_\Delta]$, and for any $s \geq 2$ we have $\Pr[\mathcal E_N \mid \deg(v) =s] = \binom{s}{2} / \binom{n-1}{2}$. Hence,
\begin{align*}
\Ex[\cc(v)]  & = \sum_{s=2}^{n-1}\Pr_G[\deg(v)=s]\Pr_{G,u,\tilde u}[\mathcal E_\Delta \mid \mathcal E_N, \deg(v)=s].
\end{align*}
Now recall that in a GIRG all edges are independent after fixing the geometric positions. Hence, we have $\Pr[\mathcal E_\Delta \mid \mathcal E_N, x_v = x, \deg(v)=s] = \Pr[\mathcal E_\Delta \mid \mathcal E_N, x_v = x]$ for every position $x \in \mathbb{T}^d$. Consequently, we also get the same for the marginal probabilities, $\Pr[\mathcal E_\Delta \mid \mathcal E_N, \deg(v)=s] = \Pr[\mathcal E_\Delta \mid \mathcal E_N]$. The claim follows.
%
\end{proof}

We will show that the following property is enough to ensure strong clustering. 

\begin{definition}\label{def:stochtriangle}
A distance function $\dist. : \mathbb{T}^d \to \mathbb{R}_{\geq 0}$ in the sense of Section~\ref{secModel} satisfies a \emph{stochastic triangle inequality} if there is a constant $C>0$ such that the following two conditions hold. 
\begin{enumerate}
\item For every $\eps >0$ let $x_1 = x_1(\eps),x_2 = x_2(\eps)$ be uniformly at random in the $\eps$-ball $\{x \in \mathbb{T}^d \mid \dist{x} \leq \eps\}$. Then
\begin{align}\label{eq:stochtriangle}
\liminf_{\eps \to 0} \Pr[\dist{x_1-x_2} \leq C \eps] > 0.
\end{align}
\item Moreover,
\begin{align}\label{eq:stochtriangle2}
\liminf_{\eps \to 0} \frac{\Vol(\{x \in \mathbb{T}^d \mid \dist{x} \leq \eps\})}{\Vol(\{x \in \mathbb{T}^d \mid \dist{x} \leq C \eps\})}> 0.
\end{align}
\end{enumerate}
\end{definition}
We note that every metric satisfies a stochastic triangle inequality for $C=2$, since the probability in~\eqref{eq:stochtriangle} is one by the triangle inequality, and~\eqref{eq:stochtriangle2} is satisfied with limit $C^{-d}$. The minimum component distance satisfies a stochastic triangle inequality for $C=2$, since a random point $x_1$ in the $\eps$-ball must have at least one ``small'' component of absolute value at most $\eps$, and the probability that both $x_1$ and $x_2$ have the \emph{same} small component is at least $1/d$. Thus the $\liminf$ in \eqref{eq:stochtriangle} is at least $1/d$. Moreover,~\eqref{eq:stochtriangle2} is satisfied with limit $1/C = 1/2$.

\begin{theorem}\label{thm:clustering}
Consider the GIRG model with a distance function that satisfies the stochastic triangle inequality, and let $G$ be a random instance. Then $\cc(G) = \Omega(1)$ with high probability.
\end{theorem}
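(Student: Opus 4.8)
The plan is to show $\Ex[\cc(G)] = \Omega(1)$ and then upgrade this to a high‑probability bound via the two‑sided Azuma inequality, Theorem~\ref{thm18}.

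\medskip
\emph{Expectation.} Since $\cc(G) = \frac1n\sum_{v}\cc(v)$ with $\cc(v)\ge 0$, it suffices to find a linear‑sized vertex set on which $\Ex[\cc(v)] = \Omega(1)$. By~\eqref{PL2} (with $\eta = 1$) there is a constant $B$ such that at least $n/2$ vertices have weight in $[\wmin, B]$; call this set $V_{\mathrm{good}}$. Fix $v\in V_{\mathrm{good}}$. By Lemma~\ref{lem:localclustering} it is enough to prove (a) $\Pr[\deg(v)\ge 2] = \Omega(1)$ and (b) $\Pr[u\sim u'\mid u\sim v,\, u'\sim v] = \Omega(1)$ for every $u\ne u'$ in $V_{\mathrm{good}}\setminus\{v\}$, because $|V_{\mathrm{good}}| = \Omega(n)$ means these pairs alone contribute $\Omega(1)$ to the average on the right‑hand side of Lemma~\ref{lem:localclustering}. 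Let $r^\ast = r^\ast(n)$ be the radius with $V(r^\ast) = \wmin^2/n$ (it exists since $V$ is surjective, and $r^\ast\to 0$). On $B^\ast := B_{r^\ast}(x_v)$ we have $V(\dist{x_u-x_v})\le V(r^\ast)\le w_uw_v/n$ for all $u\in V_{\mathrm{good}}$, so~\eqref{EPL}--\eqref{EPU} give $p_{uv}\in[c_L,\min\{c_U,1\}]$ there. For (a): the number of $u\in V_{\mathrm{good}}$ landing in $B^\ast$ dominates a binomial of mean $\Omega(1)$, so with constant probability at least two do, and conditioned on their positions each is joined to $v$ independently with probability $\ge c_L$. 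For (b): conditioning on $u\sim v$, the position $x_u$ lies in $B^\ast$ with probability $\Theta(V(r^\ast))\big/\Theta(w_uw_v/n) = \Omega(1)$, and on $B^\ast$ its conditional law is within constant factors of uniform (again by~\eqref{EPL}--\eqref{EPU}); the same holds for $u'$, and the two are conditionally independent given $x_v$. Now invoke the stochastic triangle inequality with $\eps = r^\ast$: by~\eqref{eq:stochtriangle} two such (near‑)uniform points of $B^\ast$ are at distance $\le C r^\ast$ with probability $\Omega(1)$, and by~\eqref{eq:stochtriangle2} this gives $V(\dist{x_u-x_{u'}})\le V(Cr^\ast) = O(V(r^\ast)) = O(1/n)$, hence $w_uw_{u'}\big/(n\,V(\dist{x_u-x_{u'}})) = \Omega(1)$ and $p_{uu'} = \Omega(1)$ by~\eqref{EPL}. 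Since $\{u,u'\}$ is drawn independently of $\{u,v\}$ and $\{u',v\}$ given all positions, chaining these $\Omega(1)$ factors (uniformly in $x_v$, which we then integrate out) yields (b). Thus $\Ex[\cc(v)] = \Omega(1)$ for all $v\in V_{\mathrm{good}}$, so $\Ex[\cc(G)]\ge\frac{|V_{\mathrm{good}}|}{n}\cdot\Omega(1) = \Omega(1)$.

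\medskip
\emph{Concentration.} Write $\cc(G) = g(Z_1,\dots,Z_n)$ with the independent blocks $Z_v := (x_v,(Y_{uv})_{u<v})$, which together determine the graph, and let $\mathcal B$ be the event that some vertex has degree exceeding $C\log^2 n$; by Lemma~\ref{thm:degrees}, $\Pr[\mathcal B] = n^{-\omega(1)}$. Altering one or two of the $Z_v$ changes only edges incident to those vertices, hence changes $\cc(u)$ only for $u$ equal to one of them or a neighbour of one of them in one of the configurations; outside $\mathcal B$ there are $O(\log^2 n)$ such $u$, each changing by at most $1$, so $|g(\omega)-g(\omega')|\le c := O(\log^2 n/n)$ for all $\omega,\omega'\in\Omega\setminus\mathcal B$ differing in at most two components. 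Applying Theorem~\ref{thm18} with $M=1$, $m=n$ and $t := \Ex[\cc(G)]/2 = \Omega(1)$ bounds $\Pr[\cc(G)\le \tfrac12\Ex[\cc(G)]]$ by $2e^{-t^2/(32 n c^2)} + (2n/c+1)\Pr[\mathcal B] = 2e^{-\Omega(n/\log^4 n)} + n^{-\omega(1)} = o(1)$, so w.h.p.\ $\cc(G) = \Omega(1)$.

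\medskip
The main obstacle is part (b) of the expectation step: one must show that conditioning on the event $u\sim v$ actually \emph{localizes} $x_u$ inside a \emph{common} vanishing ball on which its law is comparable to the uniform one, so that the stochastic triangle inequality can be invoked — this is precisely where the deterministic triangle inequality is used in the EuGIRG proof of~\cite{BKLeuclidean}, and making it work forces one to juggle the $\Theta$‑slack in~\eqref{EP}, the bounded weight window $[\wmin,B]$, and both clauses of Definition~\ref{def:stochtriangle}. The concentration step, by contrast, is comparatively routine once the randomness is decomposed per vertex and the degree tail of Lemma~\ref{thm:degrees} is used as the bad event in Theorem~\ref{thm18}.
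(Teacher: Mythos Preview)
Your expectation argument is essentially the same as the paper's and is correct: both restrict to a linear set of bounded-weight vertices, localise the conditional law of a neighbour's position to a small ball where it is near-uniform, and then invoke the two clauses of Definition~\ref{def:stochtriangle}. The paper packages this slightly differently (conditioning on the event that $v$ has \emph{exactly} two neighbours, both in the ball, and using LeCam's theorem), but the substance is the same.

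The concentration step, however, has a genuine gap. Your bad event $\mathcal B = \{\exists v:\deg(v)>C\log^2 n\}$ does \emph{not} have probability $n^{-\omega(1)}$; in fact $\Pr[\mathcal B]\to 1$. Lemma~\ref{thm:degrees} only guarantees $\deg(v)\le C(w_v+\log^2 n)$, and by the power-law assumption~\eqref{PL2} there are vertices with weight $\ge \bar w \ge n^{\omega(1/\log\log n)}\gg \log^2 n$; indeed the \emph{lower} bound in~\eqref{eq:powerlawdegree} shows that w.h.p.\ there are $n^{1-o(1)}$ vertices of degree exceeding $C\log^2 n$. If you instead take the natural small-probability event $\{\exists v:\deg(v)>C(w_v+\log^2 n)\}$, the Lipschitz constant becomes $c=\Theta(\wmax/n)$, and since $\wmax$ can be as large as $n^{1/(\beta-1)+o(1)}$ with $\beta\in(2,3)$, one gets $mc^2 = n^{2/(\beta-1)-1+o(1)}\to\infty$, so Theorem~\ref{thm18} gives nothing.

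This is exactly why the paper does not apply concentration to $\cc(G)$ directly. It first passes to the induced subgraph $G'=G[V']$ on $V'=\{v:w_v\le n^{1/8}\}$, where the bad event $\{\exists v\in V':\deg_{G'}(v)>Cn^{1/8}\}$ \emph{does} have probability $n^{-\omega(1)}$, giving $c=O(n^{-7/8})$ and hence $e^{-\Omega(n^{1/4})}$ concentration for $\cc(G')$. A separate (short) argument is then needed to transfer the bound back to $\cc(G)$, using $|V|\cdot\cc(G)\ge |V'|\cdot\cc(G')-\sum_{v\notin V'}\deg(v)$ together with $\sum_{v\notin V'}\deg(v)=o(n)$. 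Your proposal is missing both this restriction and the transfer step.
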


\begin{proof}
Let $V' := \{v\in V \mid w_v \le n^{1/8}\}$ and $G'=G[V']$. Considering $G'$ will be helpful later on to prove concentration, but we first show that $\Ex[\cc(G')] = \Omega(1)$.  
Since in the definition of distance function we required the volume $V(r) = \Vol(\{x \in \mathbb T^d \mid x \leq r\})$ to be surjective on $[0,1]$, there exists $r>0$ such that $V(r) = 1/n$. For any vertex $v$, we define $U(v) := \{x \in \mathbb T^d \mid \dist{x-x_v} \leq r\}$ to be the $r$-neighborhood of $x_v$. Note that by~\eqref{EPL} and~\eqref{PL1}, any two different vertices $u, v$  with $u \in U(v)$ have edge probability $p_{uv} = \Omega(1)$. 

Now we restrict the vertex set even further. By~\eqref{PL2} there exists $w_0 = O(1)$ such that there are $\Omega(n)$ vertices with weight at most $w_0$. Since $\cc(G') = \frac{1}{|V'|}\sum_{v\in V'} \cc_{G'}(v)=\Omega(\frac{1}{n}\sum_{v\in V'} \cc_{G'}(v))$, it suffices to show that a vertex $v$ of weight at most $w_0$ has expected clustering coefficient $\Ex[\cc_{G'}(v)] = \Omega(1)$. So we fix a vertex $v \in V_{\leq w_0} :=\{v\in V \mid w_v \leq w_0\}$ and its position $x_v$. 

Then the expected number of vertices in $V_{\leq w_0}$ with position in $U(v)$ is $\Theta(1)$. Consider the event~$\mathcal{E} = \mathcal{E}(v)$ that the following three properties hold.
\begin{enumerate}[(i)]
\item $v$ has exactly two neighbors in $V_{\leq w_0}$ with positions in $U(v)$.
\item $v$ does not have neighbors in $V_{\leq w_0}$ with positions in $\mathbb T^d\setminus U(v)$.
\item $v$ does not have neighbors in $V\setminus V_{\leq w_0}$.
\end{enumerate}
We claim that $\Pr[\mathcal{E}] = \Theta(1)$. For (i), note that the expected number of vertices in $V_{\leq w_0}$ with position in $U(x)$ is $\Theta(1)$. Since the position of every vertex is independent, by LeCam's theorem (Theorem~\ref{thm:lecam}) the probability that there are exactly two vertices in $V_{\leq w_0}$ with position in $U(x)$ is $\Theta(1)$. Moreover, for each such vertex the probability to connect to $v$ is $\Theta(1)$, so (i) holds with probability $\Theta(1)$. Conditioned on these events (i.e., all other vertices are in $\mathbb T^d\setminus U(v)$), we bound (ii). Recall that for any vertex $u \neq v$ we have $\Pr[u \in \mathbb T^d\setminus U(v)] = 1-1/n$. Therefore, for any vertex $u \neq v$, we can bound 
\[
\Pr[v \sim u \mid x_v \in \mathbb T^d\setminus U(v)] \leq \frac{\Pr[v \sim u]}{\Pr[x_v \in \mathbb T^d\setminus U(v)]} \leq \Pr[v \sim u] + \frac{1}{n-1} = O(w_u/n),
\]
where the last step follows since $\Pr[v \sim u] = \Theta(\min\{1,w_uw_v/n\})$~\cite[Lemma 4.3]{BKLgeneral} and $w_v = \Theta(1)$. Since $W = \sum_{u \in V}w_u = O(n)$ and $\max_{v\in V}{w_v} = o(n)$ by~\eqref{PL2}, we may apply LeCam's theorem, and (ii) holds with probability~$\Theta(1)$. Finally, for every fixed position $x$, (iii) holds independently of (i) or (ii) with probability $\Theta(1)$, again by LeCam's theorem. This proves the claim that $\Pr[\mathcal{E}] = \Theta(1)$. 

Conditioned on $\mathcal{E}$, let $u$ be a neighbor of $v$. Then $x_{u}\in U(v)$, and $w_u \leq w_0$. Moreover, since $p(w_v,w_u,x_v,x_u) = \Theta(1)$ for all weights $w_u \leq w_0$ and all positions $x_u \in U(v)$, we have for all measurable $A \subseteq U(v)$,
\begin{align*}
\Pr_{x_u}[x_u \in A \mid \mathcal{E}, v_1\sim v] & = \frac{\int_{A} \Pr[x_1=u, v\sim v_1]du}{\int_{U(v)} \Pr[x_1=u,v\sim v_1 ]du} = \Theta\left(\frac{\Vol(A)}{\Vol(U(v))}\right).
\end{align*}
In other words, conditioned on $\mathcal E$, the position $x_u$ is uniformly distributed in $U(v)$ up to constant factors. The same holds for the second neighbour $u'$ of $v$. 
Therefore, by~\eqref{eq:stochtriangle}, with probability $\Omega(1)$ we have $\dist{u-u'} \leq Cr$. By~\eqref{eq:stochtriangle2} we also have $V(Cr) = O(V(r)) = O(1/n)$, and thus $p_{uu'} = \Omega(1)$. So we have shown that $\Ex[\cc_{G'}(v)\mid \mathcal{E}(v)] = \Omega(1)$ for all $v\in V_{\leq w_0}$. Since $\Pr[\mathcal{E}(v)] = \Theta(1)$, this proves $\Ex[\cc_{G'}(v)] = \Omega(1)$ for all $v\in V_{\leq w_0}$, which implies $\Ex[\cc(G')]=\Omega(1)$. \smallskip

It remains to show concentration. We want to apply the Azuma-Hoeffding bound with two-sided error event (Theorem~\ref{thm18}) to $f(G') := \cc(G')$, where we define the error event $\mathcal{B}$ to be the event that there is a vertex in $V'$ of degree larger than $Cn^{1/8}$ for a suitable constant $C>0$. By Lemma~\ref{thm:degrees} we have $\Pr[\mathcal B] = n^{-\omega(1)}$ if $C$ is large enough. 
Similar as in the proof of Lemma~\ref{lEdgeNum}, we apply Theorem~\ref{thm18} to the following $2|V'|-1$ independent random variables, where we may assume that $V' = \{1,\ldots,|V'|\}$. Firstly, for every $v\in V'$ we include its random position $x_v$ as a random variable. Secondly, for each $v \in V' \setminus\{1\}$ we include a random variable $Y_v :=(Y_{v1},\ldots,Y_{v\, {v-1}})$, which is uniformly at random in $[0,1]^{v-1}$. The random variables $Y_{uv}$ play the same role as in Section~\ref{secLB}, so we include the edge $\{uv\}$ if and only if $Y_{uv} \leq p_{uv}$. As in Theorem~\ref{thm18}, we denote the product space corresponding to the $2|V'|-1$ random variables by $\Omega$. 

Note that if $\omega, \omega'$ are two events in $\Omega \setminus \mathcal B$ then every random variables $x_v$ or $Y_v$ that differs in $\omega$ and $\omega'$ can affect at most the cc's of those vertices which are adjacent to $v$ in either $\omega$ or $\omega'$. Hence, the effect of each coordinate on $\cc(G')$ is at most $\tfrac{1}{|V'|}2Cn^{1/8} \leq 4Cn^{-7/8}$, and thus $|f(\omega) - f(\omega')| \leq 8Cn^{-7/8}$ if $\omega$ and $\omega'$ differ in at most two components. Note that for this type of argument it is crucial that both of $\omega$ and $\omega'$ are ``good'' events. Now we may apply Theorem~\ref{thm18} with $c= 8Cn^{-7/8}$, $M=1$, $m=2|V'|-1 \leq 2n$, and $t:= n^{-1/4}$, and obtain
\[
\Pr\left[|\cc(G')-\Ex[\cc(G')]| \ge n^{-1/4}\right] \le 2e^{-\Omega(n^{1/4})} + 8Cn^{1/8}\Pr[\mathcal{B}]  = o(1).
\]
Since $\Ex[\cc(G')] = \Theta(1)$, this shows that w.h.p.\! $\cc(G')=\Omega(1)$. To conclude the same for $G$, we observe that $|V|\cdot\cc(G) \ge |V'|\cdot\cc(G') -\sum_{v \in V \setminus V'} \deg(v)$. However, an easy calculation shows that w.h.p.
\[
\sum_{v \in V \setminus V'} \deg(v)\stackrel{\text{\cite[Lemma 4.5]{BKLgeneral}}}{=}(1+o(1))\sum_{v \in V \setminus V'} w_v\stackrel{\text{\cite[Lemma 4.2]{BKLgeneral}}}{=}o(n).
\]
Since w.h.p.\! $|V'| = \Theta(n)$, this concludes the argument and proves $\cc(G)=\Omega(1)$ with high probability.
\end{proof}




\end{document}